\setlist[enumerate]{leftmargin=1.5em}
\setlist[itemize]{leftmargin=1.5em}
\definecolor{green}{rgb}{0,0.8,0} % Redefines the color green.
\newtheorem{maintheorem}{Theorem}
\newtheorem{theorem}{Theorem}[section]
\newtheorem{lemma}[theorem]{Lemma}
\newtheorem{proposition}[theorem]{Proposition}
\theoremstyle{definition}
\theoremstyle{remark}
\newtheorem{remark}[theorem]{Remark}
\numberwithin{equation}{section}
\newcommand{\nrm}[1]{\Vert#1\Vert}
\newcommand{\nnrm}[1]{{\vert\kern-0.25ex\vert\kern-0.25ex\vert #1 
		\vert\kern-0.25ex\vert\kern-0.25ex\vert}}
\newcommand{\lap}{\Delta}
\newcommand{\rd}{\partial}
\newcommand{\nb}{\nabla}
\newcommand{\alp}{\alpha}
\newcommand{\dlt}{\delta}
\newcommand{\Dlt}{\Delta}
\newcommand{\tht}{\theta}
\newcommand{\omg}{\omega}
\newcommand{\bbT}{\mathbb T}
\newcommand{\calI}{\mathcal I}
\newcommand{\calL}{\mathcal L}
\newcommand{\calS}{\mathcal S}
\newcommand{\mrI}{\mathrm{I}}
\newcommand{\mrII}{\mathrm{II}}
\newcommand{\mrIII}{\mathrm{III}}
\newcommand{\mrIV}{\mathrm{IV}}
\newcommand{\mrV}{\mathrm{V}}
\newcommand{\mrVI}{\mathrm{VI}}
\newcommand{\mrVII}{\mathrm{VII}}
\newcommand{\mrVIII}{\mathrm{VIII}}
\newcommand{\tu}{\tilde{u}}					% tilde-u
\newcommand{\lu}{u^{\mathcal{L}}}
\newcommand{\iu}{u^{\mathcal{I}}}
\newcommand{\su}{u^{\mathcal{S}}}
\newcommand{\lw}{\omega^{\mathcal{L}}}
\newcommand{\iw}{\omega^{\mathcal{I}}}
\newcommand{\sw}{\omega^{\mathcal{S}}}
\newcommand{\iA}{A^{\mathcal{I}}}
\newcommand{\normp}[1]{{\left\Vert #1 \right\Vert}_{L^{p}}}
\newcommand{\normif}[1]{{\left\Vert #1 \right\Vert}_{L^{\infty}}}
\newcommand{\normb}[1]{{\left\Vert #1 \right\Vert}_{L^2}}
\newcommand{\normifr}[1]{{\left\Vert #1 \right\Vert}_{L^{\infty}\left (\mathbb{T}^3\right)}}
\newcommand{\normbr}[1]{{\left\Vert #1 \right\Vert}_{L^2\left (\mathbb{T}^3\right)}}
\newcommand{\normhs}[1]{{\left\Vert #1 \right\Vert}_{H^s}}
\newcommand{\normhsps}[1]{{\left\Vert #1 \right\Vert}_{H^{s+1}}}
\newcommand{\normhsr}[1]{{\left\Vert #1 \right\Vert}_{H^{s}(\mathbb{T}^3)}}
\newcommand{\normhssr}[1]{{\left\Vert #1 \right\Vert}_{H^{s-1}(\mathbb{T}^3)}}
\newcommand{\normhspps}[1]{{\left\Vert #1 \right\Vert}_{H^{s+2}}}
\newcommand{\normhspsr}[1]{{\left\Vert #1 \right\Vert}_{H^{s+1}(\mathbb{T}^3)}}
\newcommand{\normhsppsr}[1]{{\left\Vert #1 \right\Vert}_{H^{s+2}(\mathbb{T}^3)}}
\newcommand{\nrmb}[1]{\left\Vert{#1}\right\Vert}
\newcommand{\bu}{\Bar{u}}
\newcommand{\llu}{u^{Lin}}
\newcommand{\ltu}{\Tilde{u}^{Lin}}
\newcommand{\du}{\Tilde{u}^{D}}
\newcommand{\dusu}{\bar{u}^{D*}}
\newcommand{\busu}{\Bar{u}^*}
\newcommand{\piu}{u^{\mathcal{I},P}}
\newcommand{\psu}{u^{\mathcal{S},P}}
\newcommand{\piw}{\omega^{\mathcal{I},P}}
\newcommand{\psw}{\omega^{\mathcal{S},P}}
\begin{document}
	
	\title{Weakened vortex stretching effect in three scale hierarchy for the 3D Euler equations} 
	%\author{In-Jee Jeong\thanks{Department of Mathematical Sciences and RIM, Seoul National University. E-mail: injee\_j@snu.ac.kr}\and Jungkyoung Na\thanks{Department of Mathematical Sciences, Seoul National University. E-mail: njkyoung09@snu.ac.kr}\and Tsuyoshi Yoneda\thanks{Graduate School of Economics, Hitotsubashi University. E-mail: t.yoneda@r.hit-u.ac.jp}}

	\author{In-Jee Jeong}
	\address{Department of Mathematics and RIM, Seoul National University.}
	\email{injee\_j@snu.ac.kr}
	
	\author{Jungkyoung Na}
	\address{Department of Mathematics, Brown University.}
	\email{jungkyoung\_na@brown.edu}
	
	\author{Tsuyoshi Yoneda}
	\address{Graduate School of Economics, Hitotsubashi University.}
	\email{t.yoneda@r.hit-u.ac.jp}
	
	\date{\today}
	\footnotetext{\emph{2020 AMS Mathematics Subject Classification:} 76B47, 35Q35}
	
	%\thanks{}%
	%\subjclass{}%
	%\keywords{}%
	
	%\date{\today}%
	%\dedicatory{}%
	%\commby{}%
	% ----------------------------------------------------------------
	
	\maketitle
	
	% ----------------------------------------------------------------
	
	\begin{abstract}
		We consider the 3D incompressible Euler equations under the following three scale hierarchical situation: large-scale vortex stretching the middle-scale, and at the same time, the middle-scale stretching the small-scale. In this situation, we show that, the stretching effect of this middle-scale flow is weakened by the large-scale.  {In other words, the vortices being stretched could have the corresponding stress tensor being weakened.}
	\end{abstract}
	
	%\tableofcontents
	
	\section{Introduction}

	Recent direct numerical simulations \cite{Goto-2008,GSK,Motoori-2019,Motoori-2021} of
	the 3D Navier--Stokes turbulence at high Reynolds numbers have shown that
	there exists a \textit{hierarchy} of scale local vortex stretching dynamics.
	In particular, Goto--Saito--Kawahara \cite{GSK} discovered that turbulence at sufficiently high Reynolds numbers in a periodic cube is composed of a self-similar
	hierarchy of anti-parallel pairs of vortex tubes, which is sustained
	by creation of smaller-scale vortices due to stretching in larger-scale strain fields. 
	%They also observed that vortices at each hierarchical level are most likely to
	%be stretched in strain fields around two to eight times larger vortices (we  call it scale locality). 
	This discovery has been further investigated by Y.--Goto--Tsuruhashi \cite{YGT} (see also \cite{TGOY}). 
	From these previous results, we could conclude physically  
	that the most important features of the Navier--Stokes turbulence could be scale local vortex stretching which does not seem to be random (see also \cite{JY1,JY2,JY3} for the related mathematical results). Therefore as the sequence of these studies, our next study will be clarifying the locality of vortex stretching dynamics precisely, and in this paper we consider it with three-scale hierarchical structure (c.f. for a geometric approach  considering this locality, see Shimizu--Y. \cite{SY}).
	% three scale hierarchal vortex stretching,
	%and confirm multi-scale vortex stretching is not easy to occur.   
	%Now let us mathematically formulate it.
	%In order to mathematically consider and  it,
	
	In this paper, we shall consider solutions to the 3D incompressible Euler equations in $\bbT^3$
	\begin{equation}\label{eq:3D-Euler}
		\left\{
		\begin{aligned}
			\rd_t u + u \cdot \nb u + \nb p = 0 ,  \\
			\nb \cdot u = 0,
		\end{aligned}
		\right.
	\end{equation} in which the velocity has the following hierarchical structure: $u^\calL+ u^\calI + u^\calS$, where the letters $\calL,\calI,\calS$ stand for large, intermediate, and small-scale, respectively. They will be arranged in a way that the corresponding vorticities, denoted by $\omg^\calL, \omg^\calI, \omg^\calS$, are mutually almost orthogonal. This is natural since the rate of vortex stretching interaction is maximized when the vortex lines are orthogonal to each other. Indeed, this orthogonality was confirmed in direct numerical simulations \cite{Goto-2008,GSK,Motoori-2019,Motoori-2021} by statistical means. The goal of this paper is to understand the dynamics of vortex stretching under this hierarchical structure. 
	
	Before we describe our results, let us give details of our flow configuration. We take the length scale $L$ of the torus $\mathbb{T}^3:=[-L,L)^3$ to be large ($L=100$ suffices) and fix the large-scale velocity to be the linear strain \begin{equation}\label{eq:u-L}
		\begin{split}
			u^\calL(x) :=  (Mx_1, -Mx_2, 0),
		\end{split}
	\end{equation} in the region $|x|\le 10$, for some large $M\gg 1$. This velocity field corresponds to a large-scale antiparallel columnar vortex parallel to the $x_{3}$-axis supported away from the origin.  {In the following, we shall implicitly assume that $u^\calL$ is a steady solution to 3D Euler with some smooth forcing $f^\calL$ which is supported in the region $|x|>10$.}
	
	To motivate our choice of smaller scale vorticities, consider the \textit{linearized} Euler dynamics around $u^\calL$ in vorticity form: 
	\begin{equation}\label{eq:3D-lin}
		\begin{split}
			\rd_t\omg + u^\calL\cdot\nb\omg = \nb u^\calL \omg 
		\end{split}
	\end{equation} and since \begin{equation*}
		\begin{split}
			\nb u^\calL = \begin{pmatrix}
				M & 0 & 0 \\
				0 & -M & 0 \\
				0 & 0& 0
			\end{pmatrix},
		\end{split}
	\end{equation*} we see that, with respect to the $L^{\infty}$-norm, the solution to \eqref{eq:3D-lin} expands and contracts exponentially in time with rate $Mt$ along the $x_1$ and $x_2$-axis, respectively. %Therefore, while the large-scale motion is fixed, the intensity of the vorticity parallel to the $x_1$ axis will keep growing 
	
	This raises the following question: if we arrange intermediate and small-scale vorticities to be respectively parallel to $x_1$ and $x_2$ axis, is it possible that the intermediate vortex (being exponentially stretched by the large-scale) significantly stretches the small-scale, by dominating the decay effect of the large-scale? 
	
	It turns out that, interestingly, the answer is no: the small-scale vortex, even in the presence of exponentially stretched intermediate-scale, still decays exponentially in time with the same rate $-Mt$. While this is not too hard to see for the linearized Euler equations, we establish this in the full nonlinear Euler equations: consider the system  \begin{equation}\label{eq:3D-vort}
		\left\{
		\begin{aligned}
			\rd_t \omg + (u + u^{\calL})\cdot \nb \omg = \nb(u + u^\calL) \omg  , \\
			u = \nb\times (-\lap)^{-1}\omg, 
		\end{aligned}
		\right.
	\end{equation} where we shall assume that $\omg = \omg^\calI + \omg^\calS$ is supported in the region $|x|\le 10$, so that \eqref{eq:u-L} applies. We write $u^\calI = \nb\times (-\lap)^{-1}\omg^\calI$ and $u^\calS = \nb\times (-\lap)^{-1}\omg^\calS$. More precisely, we take  \begin{equation}\label{def: iw_0}
		\begin{split}
			\iw_0(x) := (\iw_{1,0}(x_2,x_3),0,0),\qquad \iw_{1,0}(x_2,x_3) := \sum_{i,j \in \{0,1\} } (-1)^{i+j} \varphi( (-1)^i x_2, (-1)^j x_3), 
		\end{split} 
	\end{equation}
	where $\varphi\in C^{\infty}_c$ is a non-negative function supported in $\left\{(x_2,x_3)\in \mathbb{T}^2: 1\le x_2,x_3\le 2 \right\}$. Note that $\iw_0$ is parallel to the $x_1$-axis and has odd symmetry in both $x_2$ and $x_3$. These odd symmetries are imposed to maximize the stretching effects of $\iw$ near the origin and has been inspired by several works \cite{KS,Z,EM}. Finally, with some sufficiently small $0< \varepsilon, \ell \ll 1$, we fix some divergence-free vector field $\tilde{\psi} \in C^{\infty}_{c}(B_{0}(2\ell))$ and require 
	\begin{equation}\label{def: sw_0}
		\sw_0(x) = \varepsilon\tilde{\psi}(x), \qquad \tilde{\psi}(x)=\left(0,\psi\left(\ell^{-1}x\right),0\right), \qquad |x|\le \ell 
	\end{equation} for some smooth bump function $\psi\ge0$. 
	
	\medskip

	We consider the flow map $\Phi$ generated by $\lu+\iu+\su$: namely, $\Phi(0,x)=x$ and \begin{equation}\label{def: flow Phi}
		\frac{d\Phi}{dt}(t,x) = (\lu+\iu+\su)(t,\Phi(t,x)).
	\end{equation} The solution to \eqref{eq:3D-vort} satisfies the Cauchy formula $\omg(t,\Phi)=\nb\Phi \omg_0$, and naturally, we define the evolution of intermediate and small-scale vortex by $\iw (t,\Phi(t,x)) = \nb\Phi(t,x) \iw_{0}(x)$ and $\sw (t,\Phi(t,x)) = \nb\Phi(t,x) \sw_{0}(x)$.  {As long as the solution remains smooth, the support of $\iw$ and $\sw$ remains disjoint.}

	\medskip 
	
	We are now ready to state our main theorem.  {For $0<r<L$, $B_0(r)$ denotes the ball $\{ |x| < r \}$.}
	
	\begin{maintheorem}\label{thm: presence}
		Consider the solution to \eqref{eq:3D-vort} with initial data $\omg_{0} = \iw_0 + \sw_0 \in C^\infty(\bbT^3)$ with $\bbT^3:=[-L,L)^3$. The solution remains smooth in the time interval $[0,T_{M}]$, where $T_M = M^{-1}\log(1+M)$. On this time interval, we have \begin{equation}\label{eq:nb-u-i-decay}
			\begin{split}
				\nrmb{ \nb \iu (t,\cdot) }_{L^\infty(B_{0}(\ell))} \le C\exp(-C^{-1}Mt)  {\nrm{\nb \iu_{0}}_{L^\infty(\bbT^{3})} }
			\end{split}
		\end{equation} and \begin{equation}\label{eq:omega-s-decay}
			\begin{split}
				\nrmb{ \sw (t,\cdot) }_{L^{\infty}\left(  { B_{0}( \tilde{\ell}) }  \right)} \le  C\exp(-Mt)  {\nrm{\sw_{0}}_{L^\infty(\bbT^{3})} }, \qquad \tilde{\ell} = (M^{-1}\ell)^{C} 
			\end{split}
		\end{equation} %uniformly for all $M\gg1$ and $0<\varepsilon,\ell\ll1$, where $A>1$ is an absolute constant and $C>1$ depends only on $L$, $\psi$ and $\varphi$. 
		 {uniformly for all $M\gg1$, $0<\ell\le M^{-C}$, and $0<\varepsilon\le \ell^{2s-3}\exp{(-M^C)}$ with $s>5/2$, where $C>1$ is a constant depending only on $L$, $\psi$ and $\varphi$.}
	\end{maintheorem}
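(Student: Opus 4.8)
\emph{Strategy.} The plan is to run a continuation (bootstrap) argument on the maximal interval $[0,T^{*})$ of smooth existence. Fix a large constant $C_{0}=C_{0}(L,\varphi,\psi)$ and assume that on $[0,T]\subset[0,\min\set{T^{*},T_{M}}]$ the following hold:
\begin{itemize}
	\item[(H1)] $\nrmb{\omg(t)}_{H^{s}}\le C_{0}\veps\,\ell^{3/2-s}\exp(M^{C_{0}})$;
	\item[(H2)] $\supp\iw(t)\subset\set{|x|\ge 1}\cap\set{|x_{2}|\le C_{0}(e^{-Mt}+M^{-1})}$, $\nrmb{\iw(t)}_{L^{\infty}}\le C_{0}e^{Mt}$, the per-$x_{1}$-slice mass $\int|\iw_{1}(t,\cdot,\cdot)|\,\ud x_{2}\,\ud x_{3}$ is $\le C_{0}$, and the $x_{2},x_{3}$-components of $\iw(t)$ are $\le C_{0}\veps e^{Mt}$;
	\item[(H3)] $\supp\sw(t)\subset B_{0}(\ell(1+M))$ and $\nrmb{\sw(t)}_{L^{\infty}}\le C_{0}\veps e^{-Mt}$.
\end{itemize}
Hypothesis (H1) keeps the solution smooth by Beale--Kato--Majda, so $T^{*}$ is never attained, and it suffices to recover (H1)--(H3) with $C_{0}$ replaced by $C_{0}/2$: this forces $T^{*}>T_{M}$ and gives the two asserted bounds. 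Throughout, $\aleq$ absorbs constants depending only on $L,\varphi,\psi$, and $\Phi$ is the flow of $\lu+\iu+\su$.

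\emph{The weakened intermediate strain.} The core estimate is the bound on $\nb\iu$ near the small scale. Since, up to the $O(\veps)$ error allowed by (H2), $\iw$ is columnar, $\iw=(\iw_{1},0,0)$, Biot--Savart gives $\iu=(0,\rd_{3}\phi,-\rd_{2}\phi)$ with $\phi=(-\lap)^{-1}\iw_{1}$, so every entry of $\nb\iu$ is of the form $\int\nb^{2}G(y-z)\,\iw_{1}(z)\,\ud z$ with $G$ the periodic Newtonian potential. For $y\in B_{0}(\ell(1+M))$, (H2) puts $\supp\iw_{1}$ at distance $\ageq1$, so $\nb^{2}G(y-\cdot)$ is bounded there. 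The decay is then extracted from the odd-in-$x_{2}$ symmetry of $\iw_{1}$ --- propagated by \eqref{eq:3D-vort} because $\lu$, $\iu$, and the $\sw$-driven correction all respect it --- under which folding $z_{2}\mapsto-z_{2}$ replaces the kernel by a difference of size $O(|y_{2}|+|z_{2}|)=O(e^{-Mt}+M^{-1})$ (by (H2) and $\ell\le M^{-C}$); with the bounded slice-mass this yields $\nrmb{\nb\iu(t)}_{L^{\infty}(B_{0}(\ell(1+M)))}\aleq e^{-Mt}+M^{-1}$. Since $Mt\le\log(1+M)$ on $[0,T_{M}]$, the right-hand side is $\aleq e^{-C^{-1}Mt}$ once $C>1$, and as $\nrmb{\nb\iu_{0}}_{L^{\infty}}\aeq1$ this is exactly \eqref{eq:nb-u-i-decay}.

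\emph{Closing (H2)--(H3).} For (H2), the point is that $\iw$ is nearly $x_{1}$-independent and $e_{1}$-directed --- $\iw_{0}$ being so and only $\sw=O(\veps)$ breaking it. In this columnar picture $\dot x_{1}=Mx_{1}$, so $\nrmb{\iw_{1}(t)}_{L^{\infty}}=e^{Mt}\nrmb{\iw_{1,0}}_{L^{\infty}}$ and the slice-mass is conserved; consequently the planar self-velocity $(\iu_{2},\iu_{3})$ has bounded circulation, hence, being that of a thin tube, $\nrmb{(\iu_{2},\iu_{3})(t)}_{L^{\infty}}\aleq1$ uniformly in $M$. Feeding this into $\dot x_{2}=-Mx_{2}+\iu_{2}$, $\dot x_{3}=\iu_{3}$ via Duhamel controls the $x_{2}$-extent by $e^{-Mt}+M^{-1}$ and keeps the tube at distance $\ge1$ from the origin --- the self-translation of the near-origin antiparallel pair displaces it by only $\int_{0}^{T_{M}}e^{Ms}\,\ud s=(e^{MT_{M}}-1)/M=1$. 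The remaining parts of (H2) follow from the Cauchy formula $\omg(t,\Phi)=\nb\Phi\,\omg_{0}$, the $x_{2},x_{3}$-components of $\iw$ staying $\aleq\veps e^{Mt}$ because the relevant entries of $\nb\Phi$ are driven only by $O(\veps)$ terms through the \emph{finite} factor $\exp(\int_{0}^{T_{M}}\nrmb{\nb\iu}_{L^{\infty}(\mathrm{tube})})=O(1)$. For (H3), the same Cauchy formula gives $\sw(t,\Phi)=\nb\Phi\,\sw_{0}$; near the origin $\nb u=\mathrm{diag}(M,-M,0)+O(e^{-Mt}+M^{-1}+\veps)$, and since the first row of $\nb\iu$ is $O(\veps e^{Mt})$ (columnar structure) the unstable $e_{1}$-component of the error in $\nb\Phi\,e_{2}$ is only $O(\veps)$ after amplification, while its $e_{2},e_{3}$-components are $\aleq M^{-1}\aleq e^{-Mt}$; as $\sw_{0}$ points along $e_{2}$ on its core $\set{|x|\le\ell}$, this gives $|\sw(t,\Phi(t,x))|\aleq e^{-Mt}\nrmb{\sw_{0}}_{L^{\infty}}$ there, and a (lossy) invertibility estimate for $\Phi$ --- which is where the power $C$ in $\tell=(M^{-1}\ell)^{C}$ is spent --- shows $\Phi(t,\set{|x|\le\ell})\supset B_{0}(\tell)$, proving \eqref{eq:omega-s-decay}.

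\emph{Smoothness, and the main obstacle.} To recover (H1) one uses the $H^{s}$ energy inequality $\tfrac{\ud}{\ud t}\nrmb{\omg}_{H^{s}}\aleq\nrmb{\nb u}_{L^{\infty}}\nrmb{\omg}_{H^{s}}$ with $\nrmb{\nb u}_{L^{\infty}}\aleq M+e^{Mt}$ on $\supp\iw$ and $\aleq M+\veps\ell^{-1}e^{Mt}$ on $\supp\sw$; Gronwall over $[0,T_{M}]$ costs at worst $\exp(M^{C})$, the dangerous contributions coming from the thin scales $e^{-Mt}$ and $\ell$ entering $\nrmb{\omg}_{H^{s}}$ through $\ell^{3/2-s}$ and $e^{(s-\frac12)Mt}$, and the hypothesis $\veps\le\ell^{2s-3}\exp(-M^{C})$ is precisely what keeps $C_{0}\veps\ell^{3/2-s}\exp(M^{C_{0}})$ under the threshold and the $\sw$-term perturbative everywhere. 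The hard part is the tube dynamics used to close (H2): on all of $[0,T_{M}]$ the self-strain of $\iw$ is comparable to the ambient rate $M$, so $\iw$ is not a passive tracer of the linear strain and the bootstrap constants must be tracked with care. What rescues the argument is that the self-induced \emph{velocity} (unlike its gradient) stays bounded, thanks to conservation of the slice-circulation of $\iw$, together with the calibration of $T_{M}$ making $\int_{0}^{T_{M}}e^{Ms}\,\ud s=1$, so that every stretching-induced amplification is $O(1)$; a secondary point is to trace the $O(\veps)$ breaking of the columnar symmetry so that it never contaminates the unstable $e_{1}$ direction.
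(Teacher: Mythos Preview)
Your bootstrap strategy is conceptually natural, but it differs substantially from the paper's route and, as written, does not close. The paper does \emph{not} bootstrap on the real pair $(\iw,\sw)$. Instead it introduces a \emph{pseudo-intermediate} vorticity $\omega^{\calI,P}$ solving the Euler system with $\sw$ entirely removed; this object is \emph{exactly} columnar and \emph{exactly} odd in $x_2,x_3$ for all time, so the 2D analysis (conserved $L^1$, log--Lipschitz flow map, kernel cancellation) applies cleanly and yields $\nrm{\nb u^{\calI,P}}_{L^\infty(B_0(\delta))}\lesssim e^{-C_3^{-1}Mt}$ directly (no bootstrap). The real $\iu$ and $\sw$ are then compared to $u^{\calI,P}$ and to the linearization of $\omega^{\calS,P}$ via two abstract $H^s$ perturbation principles, using that $\varepsilon$ is taken small enough to absorb $\exp(M^C)$ losses.

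There are two concrete places where your bootstrap breaks. First, the claim ``$\nrm{(\iu_2,\iu_3)}_{L^\infty}\lesssim 1$ uniformly in $M$'' is not correct: a thin strip of vorticity with conserved slice mass and $L^\infty$ growth $e^{Mt}$ produces a velocity of order $1+\log(1/\delta)\sim 1+Mt$, not $O(1)$. Feeding this into $\dot x_2=-Mx_2+\iu_2$ gives $|x_2(t)|\lesssim e^{-Mt}+t$, so at $t=T_M$ one only gets $|x_2|\lesssim M^{-1}\log M$, and the (H2) bound $|x_2|\le C_0(e^{-Mt}+M^{-1})$ cannot be recovered with $C_0$ independent of $M$. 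The paper handles this not by bootstrapping the support width but by combining the log--Lipschitz bound with the \emph{exact} vanishing $\iu_2(t,0,x_3)=0$ (from odd symmetry) to derive a Gr\"onwall inequality for $\log A_2^{\calI}$; this yields $A_2^{\calI}\lesssim e^{-C_3^{-1}Mt}$ with a constant $C_3>1$ that absorbs the logarithm. Second, your assertion that the odd-in-$x_2$ symmetry of $\iw_1$ is ``propagated by \eqref{eq:3D-vort} because \ldots the $\sw$-driven correction respects it'' is false: $\sw_0=(0,\psi(\ell^{-1}x),0)$ has no parity in $x_2$, so $\su$ breaks the symmetry. The breaking is indeed only $O(\varepsilon)$ on $\supp\iw$, but you then cannot invoke the exact cancellation $\iu_2(t,0,x_3)=0$ needed for the log--Lipschitz argument, and you have not supplied an alternative. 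The pseudo-solution device is precisely what decouples these two difficulties: $\omega^{\calI,P}$ never sees $\sw$, so its symmetries are exact, and the feedback of $\sw$ onto $\iw$ is handled \emph{afterwards} by an $H^s$ stability estimate that costs only $\varepsilon\cdot\exp(M^C)$, which the hypothesis $\varepsilon\le\ell^{2s-3}e^{-M^C}$ is designed to absorb.
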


	 \begin{remark}
			While $T_M\to0$ as $M\to\infty$, we have that $MT_M\to\infty$, so that within the timescale of $T_M$, the exponential terms in the right hand sides of \eqref{eq:nb-u-i-decay} and \eqref{eq:omega-s-decay} decays to zero. Furthermore, \eqref{eq:nb-u-i-decay} should be contrasted with exponential growth of $\iw$: $\nrm{\iw(t,\cdot)}_{L^\infty(\bbT^3)} \gtrsim \exp(Mt)\nrm{\iw_0}_{L^\infty(\bbT^3)}$ in the same time interval.  {In other words, the vortices being stretched could have the corresponding stress tensor being weakened.}
	\end{remark}

	\medskip
	
	\noindent \textbf{Interpretation of the result}. It is important to compare the above with the case when the large-scale strain field is absent: in this case, the small-scale vortex gets stretched \textit{at least exponentially in time} by the intermediate-scale (cf. \cite{EM}). Therefore, we see that the vortex stretching effect of the intermediate scale is weakened by the large-scale, and the resulting small-scale vortex dynamics is not really different from the case when the intermediate vortex is absent. Our result suggests that the vortex stretching of ``adjacent'' scales is the one most  
	likely to occur, and thus possible blow-up solution to the three dimensional Navier--Stokes and/or Euler equations may not possess multi-scale vortex stretching motion.
	% that is, ``scale locality".
	This insight is consistent with the numerical result by Kang--Yun--Protas \cite{KYP}. Based on solving a suitable optimization problem numerically, they investigated the largest possible growth of the vorticity in finite time in three-dimensional Navier--Stokes flows. Their findings revealed that the flows maximizing the vortex growth exhibit the form of three perpendicular pairs of anti-parallel vortex tubes with the same size (see Figure 11 in \cite{KYP}). Furthermore, the flow evolution resulting from such an initial vorticity is accompanied by reconnection events. We can at least see that this flow does not possess multi-scale vortex stretching motion. 
	
	\begin{remark}
		One could similarly investigate the situation in which the directions of the intermediate and small-scale vorticities are switched. In this case, both large and intermediate-scale vortex stretch the small-scale vortex. However, in this case, the length scale of the intermediate vortex grows at the same time, and escapes the $O(1)$-region around the origin. 
	\end{remark}

	\medskip
	
	\noindent \textbf{Ideas of the proof}.  {Let us briefly explain the main steps of the proof. To begin with, when $\sw$ is completely absent, one can simply study the nonlinear evolution of $\iw$ and obtain the bound \eqref{eq:nb-u-i-decay}. This is already non-trivial since one needs to understand the nonlinear self-interaction of $\iw$. 
		
		Then, one can introduce $\sw$ and formally analyze the vortex stretching equation of $\sw$, which is $D_t(\sw) \simeq (\nb u^\calL + \nb u^\calI) \sw$ if one neglects the nonlinear self-interaction. Applying the bound \eqref{eq:nb-u-i-decay} and integrating, one obtains the desired estimate \eqref{eq:omega-s-decay}. However, in this case, not only one needs to handle the self-interaction term, but also needs to deal with the linear feedback of $\sw$ onto the intermediate scale $\iw$. Indeed, as soon as $\sw$ is introduced,  $\sw$ and  $\iw$ satisfy a coupled system of PDEs, even at the linearized level. In particular, it becomes tricky to obtain the bound \eqref{eq:nb-u-i-decay} in the first place. 
		
		Therefore, our proof consists of a two-step comparison procedure: we introduce the ``pseudo-solution'' pair $(\omg^{\calI,P}, \omg^{\calS,P})$ where $\omg^{\calI,P}$ corresponds to the intermediate scale in the absense of the small scale. Then, we compare $\sw$ with $\omg^{\calS,P}$ which is then compared in turn with the linearized dynamics around $\omg^{\calI,P}$.  
	}

	\subsection*{Notation}
	We employ the letters $C,\,C_1,\,C_2,\cdots$ to denote any constants which may change from line to line in a given computation. In particular, the constants depend only on $L$, $\psi$ and $\varphi$.
	We sometimes use $A\approx B$ and $A\lesssim B$, which mean $A=CB$ and $A\le CB$, respectively, for some constant $C$.

	\section{Preliminaries}
	
	 {The aim of this section is to establish two principles for comparing perturbed solutions of 3D Euler equations in $\bbT^3$.}
	
	\medskip
	
	Let $\bu \in L^{\infty}([0,\bar{T}];H^{s+2}(\bbT^3))$ with $s>\frac{5}{2}$ be a solution to the following Cauchy problem for the 3D incompressible Euler equations: 
	\begin{equation}\label{eq:euler bu}
		\left\{
		\begin{aligned}
			\rd_{t} \bu + \bu \cdot \nb \bu +\nb \bar{p} &= 0, \\
			\nb \cdot \bu &=0, \\
			\bu(t=0)&=\bu_0,
		\end{aligned}
		\right.
	\end{equation}
	where $\bu_0$ is a function in $H^{s+2}(\bbT^3)$. Then we consider a perturbation problem of \eqref{eq:euler bu}. To be precise, let $u$ be the solution of the following problem:
	\begin{equation}\label{eq:euler just u}
		\left\{
		\begin{aligned}
			\rd_{t} u + u \cdot \nb u +\nb p &= 0, \\
			\nb \cdot u &=0, \\
			u(t=0)&=\bu_0 + \varepsilon \tu_0,
		\end{aligned}
		\right.
	\end{equation}
	where $\varepsilon>0$ and $\tu_0$ is a function belonging to $H^{s+1}(\bbT^3)$.
	
	\medskip
	
	Now we introduce and prove our two principles.
	
	\subsection{Principle 1}
	Defining $\tu=u-\bu$ and $\tilde{p}=p-\bar{p}$, we have
	\begin{equation}\label{eq:euler tu}
		\left\{
		\begin{aligned}
			\rd_{t} \tu + \bu \cdot \nb \tu + \tu\cdot\nb\bu + \tu \cdot \nb \tu +\nb \tilde{p} &= 0, \\
			\nb \cdot \tu &=0, \\
			\tu(t=0)&=\varepsilon \tu_0.
		\end{aligned}
		\right.
	\end{equation}
	Next, we consider linearization around $\bu$: writing $\llu = \bu + \ltu$, and dropping quadratic terms in the perturbation, we arrive at
	\begin{equation}\label{eq:euler ltu}
		\left\{
		\begin{aligned}
			\rd_{t} \ltu + \bu \cdot \nb \ltu + \ltu\cdot\nb\bu +\nb \tilde{p}^{Lin} &= 0, \\
			\nb \cdot \ltu &=0, \\
			\ltu(t=0)&=\varepsilon \tu_0.
		\end{aligned}
		\right.
	\end{equation}
	Then we estimate $\normifr{\omega(t,\cdot)-\omega^{Lin}(t,\cdot)}$ on $[0,\bar{T}]$ for sufficiently small $\varepsilon>0$, where $\omega = \nb \times u$ and $\omega^{Lin} = \nb \times \llu$ are corresponding vortices.:
	\begin{proposition}\label{prop: perturbation}
		Under the above setting, there exists a constant $C>0$ such that if $\varepsilon >0$ satisfies
		\begin{equation}\label{condi: eps}
			\varepsilon \le  \frac{1}{C\normhspsr{\tu_0} \Bar{T} }\exp\left(-C\int_0^{\Bar{T}} \normhsppsr{\bu(\tau,\cdot)}d\tau \right) ,
		\end{equation}
		then on $[0,\Bar{T}]$, we have 
		\begin{equation}\label{eq:perturbation1}
			\normifr{\omega(t,\cdot)-\omega^{Lin}(t,\cdot)} \le \varepsilon^2\normhspsr{\tu_0}^2C\Bar{T}\exp\left(C\int_0^{\Bar{T}} \normhsppsr{\bu(\tau,\cdot)}d\tau \right).
		\end{equation} 
	\end{proposition}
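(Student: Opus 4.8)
The plan is a two-tier energy argument, carried out at two consecutive Sobolev levels, followed by a Sobolev embedding to pass from the velocity difference to the vorticity difference.

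\textbf{Step 1 (a high-norm bound on the nonlinear perturbation $\tu$).} I would first control $\tu=u-\bu$, which solves \eqref{eq:euler tu}, in $H^{s+1}(\bbT^3)$. In the $H^{s+1}$ energy identity the pressure gradient drops out by incompressibility ($\nb\cdot\tu=0$); the transport term $\bu\cdot\nb\tu$ is absorbed by a Kato--Ponce commutator estimate whose cost $\nrm{\nb\bu}_{L^\infty}\normhspsr{\tu}+\nrm{\nb\tu}_{L^\infty}\normhspsr{\bu}$ is $\le C\normhsppsr{\bu}\,\normhspsr{\tu}$ using $H^{s+1}\hookrightarrow W^{1,\infty}$ (valid since $s>\tfrac32$); the stretching term $\tu\cdot\nb\bu$ costs $\le C\normhspsr{\tu}\normhsppsr{\bu}$ by the algebra property of $H^{s+1}$; and the quadratic self-interaction $\tu\cdot\nb\tu$ costs $\le C\normhspsr{\tu}^2$ for the same reason. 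This yields $\frac{d}{dt}\normhspsr{\tu}\le C\normhsppsr{\bu}\,\normhspsr{\tu}+C\normhspsr{\tu}^2$. Since $\normhspsr{\tu(t=0)}=\varepsilon\normhspsr{\tu_0}$ and $\bu\in L^\infty([0,\bar T];H^{s+2})$, a continuity/bootstrap argument — which is exactly where the smallness \eqref{condi: eps} enters, to keep the quadratic term dominated by the linear one and (via the continuation criterion $\nrm{\nb u}_{L^\infty}\lesssim\normhsppsr{\bu}+\normhspsr{\tu}$) to keep $u$ smooth up to the fixed time $\bar T$ — should give
\begin{equation}\label{eq:plan-tu}
\normhspsr{\tu(t)}\le 2\varepsilon\normhspsr{\tu_0}\exp\Big(C\!\int_0^t\normhsppsr{\bu(\tau,\cdot)}\,d\tau\Big),\qquad t\in[0,\bar T].
\end{equation}

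\textbf{Step 2 (a lower-norm bound on $w:=\tu-\ltu=u-\llu$).} Subtracting \eqref{eq:euler ltu} from \eqref{eq:euler tu} shows that $w$ solves the linearized Euler equation around $\bu$, driven only by the quadratic term and starting from rest:
\[
\rd_t w+\bu\cdot\nb w+w\cdot\nb\bu+\nb(\tilde p-\tilde p^{Lin})=-\,\tu\cdot\nb\tu,\qquad \nb\cdot w=0,\qquad w(t=0)=0.
\]
I would run an $H^s$ energy estimate for $w$, one derivative below Step 1: the pressure again drops by incompressibility; the transport and stretching terms cost $\le C\normhsppsr{\bu}\normhsr{w}$ as before, the commutator now requiring $\nb w\in L^\infty$, i.e.\ $H^{s-1}\hookrightarrow L^\infty$, which is precisely where the hypothesis $s>\tfrac52$ is used; and the forcing costs $\normhsr{\tu\cdot\nb\tu}\le C\normhsr{\tu}\nrm{\nb\tu}_{H^s}\le C\normhspsr{\tu}^2$. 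Hence $\frac{d}{dt}\normhsr{w}\le C\normhsppsr{\bu}\normhsr{w}+C\normhspsr{\tu}^2$, and Gr\"onwall from $w(0)=0$, followed by inserting \eqref{eq:plan-tu}, gives
\[
\normhsr{w(t)}\le C\!\int_0^t\normhspsr{\tu(\tau)}^2\exp\Big(C\!\int_\tau^t\normhsppsr{\bu}\Big)d\tau\ \le\ C\,\varepsilon^2\normhspsr{\tu_0}^2\,\bar T\,\exp\Big(C\!\int_0^{\bar T}\normhsppsr{\bu(\tau,\cdot)}\,d\tau\Big).
\]

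\textbf{Step 3 (from $w$ to the vorticity difference) and the main obstacle.} Since $\omega-\omega^{Lin}=\nb\times(\tu-\ltu)=\nb\times w$, the embedding $H^s(\bbT^3)\hookrightarrow W^{1,\infty}(\bbT^3)$ for $s>\tfrac52$ gives $\normifr{\omega-\omega^{Lin}}\le C\nrm{\nb w}_{L^\infty}\le C\normhsr{w}$, and combining with the bound of Step 2 produces \eqref{eq:perturbation1} after relabelling constants. The commutator/algebra bookkeeping here is routine; the genuine obstacle I anticipate is Step 1, namely closing the nonlinear $H^{s+1}$ estimate for $\tu$ and converting \eqref{condi: eps} into a bootstrap that both propagates smoothness of $u$ up to the fixed time $\bar T$ — on which only $\bu$ is a priori known to exist — and keeps $\normhspsr{\tu}=O(\varepsilon)$, so that the forcing of $w$ is genuinely $O(\varepsilon^2)$. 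Matching the precise constants appearing in \eqref{condi: eps} and \eqref{eq:perturbation1} will require some care but no new idea.
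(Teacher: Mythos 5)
Your proposal matches the paper's proof essentially step for step: the same reduction $\normifr{\omega-\omega^{Lin}}\lesssim\normhsr{u-u^{Lin}}$ via Sobolev embedding, the same $H^{s+1}$ energy estimate for $\tu$ closed by a Riccati-type absorption of the quadratic term under \eqref{condi: eps}, and the same $H^s$ estimate for $\du=\tu-\ltu$ driven by the $O(\varepsilon^2)$ forcing $\tu\cdot\nb\tu$ and closed by Gr\"onwall from $\du(0)=0$. The only cosmetic difference is that you describe the smallness-exploitation as a bootstrap/continuity argument whereas the paper writes out the explicit integrating factor $y(t)=\normhsps{\tu(t)}\exp(-C\int_0^t\normhspps{\bu})$, but these are the same computation.
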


	\begin{remark}
		The point is that the right hand side of \eqref{eq:perturbation1} is $O(\varepsilon^2)$, while a priori the left hand side is only $O(\varepsilon)$ a priori. Furthermore, the right hand side depends only on $\tu_0$ and $\bu$. 
	\end{remark}
	
	\begin{proof}
		From an elementary estimate
		\begin{equation*}
			\normifr{\omega(t,\cdot)-\omega^{Lin}(t,\cdot)} \lesssim \normhssr{\omega(t,\cdot)-\omega^{Lin}(t,\cdot)} \lesssim \normhsr{u(t,\cdot)-  { u^{Lin}} (t,\cdot)},
		\end{equation*}
		it suffices to prove 
		\begin{equation*}
			\normhsr{u(t,\cdot)-u^{Lin}(t,\cdot)} \le \varepsilon^2\normhspsr{\tu_0}^2C\Bar{T}\exp\left(C\int_0^{\Bar{T}} \normhsppsr{\bu(\tau,\cdot)}d\tau \right)
		\end{equation*}
		for $\varepsilon>0$ satisfying \eqref{condi: eps}.
		Our first step is to estimate $\normhs{\tu(t,\cdot)}$ on $[0,\bar{T}]$. Denoting $J=(I-\Delta)^{\frac{1}{2}}$, \eqref{eq:euler tu} gives
		\begin{equation*}
			\begin{split}
				\frac12 \frac{d}{dt} \normhsps{\tu}^2& = -\int J^{s+1}(\bu\cdot\nb \tu)\cdot J^{s+1}\tu
				-\int J^{s+1}(\tu\cdot\nb \bu)\cdot J^{s+1}\tu \\
				&\quad -\int J^{s+1}(\tu\cdot\nb \tu)\cdot J^{s+1}\tu   -\int J^{s+1}\nb\tilde{p}\cdot J^{s+1}\tu \\
				&=\mrI + \mrII + \mrIII + \mrIV.
			\end{split}
		\end{equation*}
		Since $\tu$ is divergence-free, $\mrIV=0$. Using the fact that $H^{s+1}{(\mathbb{T}^3)}$ is a Banach algebra, we have
		\begin{equation*}
			\mrII \lesssim \normhsps{\tu\cdot\nb \bu}\normhsps{\tu} \lesssim \normhspps{\bu}\normhsps{\tu}^2.
		\end{equation*}
		For $\mrI$, we note that $\nb \cdot \tu =0$ yields $\int \left(\bu \cdot \nb J^{s+1}\tu\right) \cdot J^{s+1} \tu =0,$
		so that
		\begin{equation*}
			\mrI = -\int \left(\left[J^{s+1}, \bu \cdot\right] \nb \tu\right) \cdot J^{s+1}\tu.
		\end{equation*}
		Recalling the Kato-Ponce commutator estimate (\cite{KP88}):
		\begin{equation*}\label{est:K-P}
			\normbr{\left[J^{s'},f\right]g} \lesssim \left(\nrm{f}_{H^{\frac{5}{2}+\varepsilon'}(\bbT^3)}\normbr{J^{s'-1}g}
			+ \normbr{J^{s'}f}\nrm{g}_{H^{\frac{3}{2}+\varepsilon'}(\bbT^3)}\right)
		\end{equation*}
		for any $\varepsilon'>0$ and $s'>0$, we obtain
		\begin{equation*}
			\mrI \lesssim \normb{\left[J^{s+1}, \bu \cdot\right] \nb \tu}\normb{J^{s+1}\tu}
			\lesssim \normhsps{\bu}\normhsps{\tu}^2.
		\end{equation*}
		Replacing $\bu$ with $\tu$ and proceeding in the same way, we can also estimate $\mrIII \lesssim \normhsps{\tu}^3.$     Combining all, we arrive at
		\begin{equation*}
			\frac{d}{dt} \normhsps{\tu} \le C \left(\normhspps{\bu}\normhsps{\tu} + \normhsps{\tu}^2\right).
		\end{equation*}
		Introducing the quantity $y(t) = \normhsps{\tu(t)}\exp(-C\int_0^t \normhspps{\bu(\tau)}d\tau ),$ 
		we have
		\begin{equation*}
			\left\{
			\begin{aligned}
				&\frac{d}{dt}y(t) \le C \exp\left(C\int_0^{\Bar{T}} \normhspps{\bu(\tau)}d\tau \right) y^2(t), \\
				&  { y(0) } =\varepsilon \normhsps{\tu_0}
			\end{aligned}
			\right.
		\end{equation*}
		on $[0,\Bar{T}]$. Then for $\varepsilon>0$ satisfying  {\eqref{condi: eps} (by adjusting $C$ if necessary)}, 
		we have
		\begin{equation*}
			y(t) \le \frac{\varepsilon\normhsps{\tu_0}}{1-\varepsilon\normhsps{\tu_0}Ct\exp\left(C\int_0^{\Bar{T}} \normhspps{\bu(\tau)}d\tau \right)} \le 2\varepsilon \normhsps{\tu_0},
		\end{equation*}
		and consequently on $[0,\Bar{T}]$, we have 
		\begin{equation}\label{est:tu}
			\normhsps{\tu(t)} \le 2\varepsilon \normhsps{\tu_0}\exp\left(C\int_0^{\bar{T}} \normhspps{\bu(\tau)}d\tau \right).
		\end{equation} Next, we consider the equation of $\du = \tu-\ltu$: 
		\begin{equation*}
			\left\{\begin{split}
				\rd_t \du + \bu \cdot \nb \du + \du \cdot \nb \bu + \tu \cdot \nb \tu + \nb (\tilde{p}-\Tilde{p}^{Lin})&=0, \\
				\nb \cdot \du &=0,\\
				\du(t=0)&=0,
			\end{split}
			\right.
		\end{equation*}
		which gives
		\begin{equation*}
			\begin{split}
				\frac12 \frac{d}{dt} \normhs{\du}^2& = -\int J^{s}(\bu\cdot\nb \du)\cdot J^{s}\du
				-\int J^{s}(\du\cdot\nb \bu)\cdot J^{s}\du  \\
    & \qquad -\int J^{s}(\tu\cdot\nb \tu)\cdot J^{s}\du   -\int J^{s}\nb(\tilde{p}-\Tilde{p}^{Lin})\cdot J^{s}\du \\
				&=\mrV + \mrVI + \mrVII + \mrVIII.
			\end{split}
		\end{equation*}
		To estimate $\mrV$, $\mrVI$, and $\mrVIII$, proceeding in the same way as $\mrI$, $\mrII$, and $\mrIV$, respectively, we have
		\begin{equation*}
			\mrV \lesssim \normhs{\bu}\normhs{\du}^2, \qquad \mrVI \lesssim \normhsps{\bu}\normhs{\du}^2,\qquad \mrVIII =0.
		\end{equation*}
		For $\mrVII$, the fact that $H^s{(\mathbb{T}^3)}$ is a Banach algebra implies
		\begin{equation*}
			\mrVII \lesssim \normhs{\tu\cdot\nb \tu} \normhs{\du} \lesssim \normhsps{\tu}^2\normhs{\du}.
		\end{equation*}
		Combining all, we obtain
		\begin{equation*}
			\frac{d}{dt} \normhs{\du} \le C \left(\normhsps{\bu}\normhs{\du} + \normhsps{\tu}^2\right).
		\end{equation*}
		Thus, for $\varepsilon>0$ satisfying \eqref{condi: eps}, the Gr\"onwall's inequality and \eqref{est:tu} give us
		\begin{equation*}
			\begin{split}
				\normhs{\du(t)} 
				&\le \exp \left(C\int_0^{t} \normhsps{\bu(\tau)}d\tau \right) C \int_0^t \normhsps{\tu(\tau)}^2 d\tau \le 4\varepsilon^2\normhsps{\tu_0}^2C\Bar{T}\exp\left(3C\int_0^{\Bar{T}} \normhspps{\bu(\tau)}d\tau \right)
			\end{split}
		\end{equation*}
		on $[0,\Bar{T}]$. Since $\du=\tu-\ltu=u-\llu$, we are done.
	\end{proof}

	\subsection{Principle 2}
	Let $u$ be the solution of \eqref{eq:euler just u}. We consider the following PDE: 
	\begin{equation}\label{eq:euler busu}
		\left\{
		\begin{aligned}
			\rd_{t} \busu + u \cdot \nb \busu +\nb \bar{p}^*-\nb \busu \cdot (u-\busu) &= 0, \\
			\nb \cdot \busu &=0, \\
			\busu(t=0)&=\bu_0,
		\end{aligned}
		\right.
	\end{equation}
	where $(\nb \busu \cdot (u-\busu))_{i}:=\sum_{j=1}^3\rd_{i}\busu_{j} (u-\busu)_{j}$ with $i=1,2,3$. (This is different from $(u-\busu)\cdot \nb\busu$.) This time, we compare $\busu$ with $\bu$ of \eqref{eq:euler bu}. Note that \eqref{eq:euler bu} and \eqref{eq:euler busu} share the same initial data. We have the following:
	\begin{proposition}\label{prop: perturbation 2}
		Under the above setting, for $t\in[0,\Bar{T}]$ and $\varepsilon>0$ satisfying \eqref{condi: eps}, there exists a constant $C>0$ such that
		\begin{equation*}
			\normhsr{\busu(t,\cdot)-\bu(t,\cdot)} \le \varepsilon\normhsps{\tu_0}C\bar{T}\left(\sup_{t\in[0,\bar{T}]}\normhspsr{\bu(t)}\right)\exp\left(C\int_0^{\Bar{T}}\left(1+ \normhsppsr{\bu(\tau)}\right)d\tau \right).
		\end{equation*}
	\end{proposition}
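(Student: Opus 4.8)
The plan is to run a direct $H^{s}(\bbT^{3})$ energy estimate on the difference $\bar{v}:=\busu-\bu$, in the same spirit as the proof of Proposition~\ref{prop: perturbation}, and to feed in the bound \eqref{est:tu} on $\tu=u-\bu$ established there. Subtracting \eqref{eq:euler bu} from \eqref{eq:euler busu} and writing $u=\bu+\tu$ (so that $u-\busu=\tu-\bar{v}$), one expands $u\cdot\nb\busu-\bu\cdot\nb\bu=u\cdot\nb\bar{v}+\tu\cdot\nb\bu$ and, using $\busu=\bu+\bar{v}$, $\nb\busu\cdot(u-\busu)=\nb\bu\cdot\tu-\nb\bu\cdot\bar{v}+\nb\bar{v}\cdot\tu-\nb\bar{v}\cdot\bar{v}$ (with the convention $(\nb a\cdot b)_{i}:=\sum_{j}\rd_{i}a_{j}\,b_{j}$ of \eqref{eq:euler busu}). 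Since $\nb\bar{v}\cdot\bar{v}=\tfrac12\nb|\bar{v}|^{2}$ is a gradient it can be absorbed into the pressure, and one finds that $\bar{v}$ is divergence-free, vanishes at $t=0$, and solves
\begin{equation*}
	\rd_{t}\bar{v}+u\cdot\nb\bar{v}+\nb q=-\tu\cdot\nb\bu+\nb\bu\cdot\tu-\nb\bu\cdot\bar{v}+\nb\bar{v}\cdot\tu, \qquad q:=\bar{p}^{*}-\bar{p}+\tfrac12|\bar{v}|^{2}.
\end{equation*}
(Sufficient regularity of $\busu$ to justify the computation below --- say $\busu\in L^{\infty}([0,\bar{T}];H^{s+1}(\bbT^{3}))$, which follows since $u\in L^{\infty}H^{s+1}$ --- can be obtained from the Cauchy-type formula $\omg^{*}(t,\Phi)=\nb\Phi\,\omg_{0}$ for the flow $\Phi$ of $u$ together with a standard mollification argument; I would not dwell on this.)

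Next I would apply $J^{s}=(I-\lap)^{s/2}$, pair with $J^{s}\bar{v}$ in $L^{2}$, and bound each term. The pressure term vanishes because $\bar{v}$ is divergence-free; the transport term $\int J^{s}(u\cdot\nb\bar{v})\cdot J^{s}\bar{v}=\int([J^{s},u\cdot]\nb\bar{v})\cdot J^{s}\bar{v}$ is handled by the Kato--Ponce commutator estimate exactly as term $\mrI$ in Proposition~\ref{prop: perturbation}, giving $\lesssim\normhsr{u}\normhsr{\bar{v}}^{2}\lesssim(\normhsr{\bu}+\normhsr{\tu})\normhsr{\bar{v}}^{2}$; the source term $-\tu\cdot\nb\bu+\nb\bu\cdot\tu$ and the linear term $-\nb\bu\cdot\bar{v}$ are controlled using that $H^{s}(\bbT^{3})$ is a Banach algebra, yielding $\lesssim\normhsr{\tu}\normhspsr{\bu}\normhsr{\bar{v}}$ and $\lesssim\normhspsr{\bu}\normhsr{\bar{v}}^{2}$ respectively. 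Note that the $H^{s+2}$-norm of $\bu$ never appears in these estimates; it enters only later, through \eqref{est:tu}.

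The one delicate term is $\int J^{s}(\nb\bar{v}\cdot\tu)\cdot J^{s}\bar{v}$, since $\nb\bar{v}\cdot\tu$ carries a full derivative of the unknown and a crude estimate would lose a derivative (the smallness of $\tu$ does not help close an $H^{s}$ bound). I would split $J^{s}(\tu_{j}\rd_{i}\bar{v}_{j})=\tu_{j}\,\rd_{i}J^{s}\bar{v}_{j}+[J^{s},\tu_{j}]\rd_{i}\bar{v}_{j}$; the commutator piece is $\lesssim\normhsr{\tu}\normhsr{\bar{v}}$ by Kato--Ponce since $s>\tfrac52$, while the top-order piece $\int\tu_{j}(\rd_{i}J^{s}\bar{v}_{j})(J^{s}\bar{v}_{i})$ is integrated by parts in $x_{i}$: the contribution with $\rd_{i}$ hitting $J^{s}\bar{v}_{i}$ equals $\int\tu_{j}\,(J^{s}\nb\cdot\bar{v})\,(J^{s}\bar{v}_{j})=0$ because $\nb\cdot\bar{v}=0$, leaving $-\int(\rd_{i}\tu_{j})(J^{s}\bar{v}_{i})(J^{s}\bar{v}_{j})\lesssim\normhsr{\tu}\normhsr{\bar{v}}^{2}$. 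This is the main obstacle of the proof, and it is overcome purely by the divergence-free structure of $\bar{v}$ --- the same structure that annihilates the pressure term. (One could alternatively pass to the vorticity equation for $\omg^{*}-\bar{\omg}$, which has no derivative-loss term, but then the zero mode of $\busu-\bu$ would require separate treatment, so the velocity-level estimate is cleaner.)

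Putting the four estimates together yields
\begin{equation*}
	\frac{d}{dt}\normhsr{\bar{v}}\le C\big(1+\normhspsr{\bu}+\normhsr{\tu}\big)\normhsr{\bar{v}}+C\,\normhsr{\tu}\,\normhspsr{\bu}, \qquad \normhsr{\bar{v}(0)}=0.
\end{equation*}
Inserting \eqref{est:tu}, i.e.\ $\normhsr{\tu(\tau)}\le\normhspsr{\tu(\tau)}\le2\varepsilon\normhspsr{\tu_{0}}\exp\!\big(C\int_{0}^{\bar{T}}\normhsppsr{\bu}\big)$ --- whose smallness under \eqref{condi: eps} also makes $\int_{0}^{\bar{T}}\normhsr{\tu}$ bounded by an absolute constant, absorbing the ``$1$'' in the coefficient --- and applying Gr\"onwall's inequality, the inhomogeneous term contributes $\varepsilon\normhspsr{\tu_{0}}\,\bar{T}\,\big(\sup_{[0,\bar{T}]}\normhspsr{\bu}\big)\exp\!\big(C\int_{0}^{\bar{T}}\normhsppsr{\bu}\big)$ and the integrating factor contributes $\exp\!\big(C\int_{0}^{\bar{T}}(1+\normhsppsr{\bu})\big)$; after renaming $C$ these combine to the claimed bound on $\normhsr{\busu(t,\cdot)-\bu(t,\cdot)}$.
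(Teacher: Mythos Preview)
Your proposal is correct and follows essentially the same route as the paper: derive the equation for $\bar v=\busu-\bu$, run an $H^{s}$ energy estimate handling the transport term by the Kato--Ponce commutator and the zero-order terms by the Banach algebra property, then close by Gr\"onwall and the bound \eqref{est:tu}. The only noteworthy difference is in the treatment of the dangerous term $\int J^{s}(\nb\bar v\cdot\tu)\cdot J^{s}\bar v$: the paper rewrites $\rd_{i}\bar v_{j}\,\tu_{j}=\rd_{i}(\bar v_{j}\tu_{j})-\bar v_{j}\,\rd_{i}\tu_{j}$ \emph{before} applying $J^{s}$, so that the gradient part pairs to zero against the divergence-free $J^{s}\bar v$ and the remainder is bounded by $\normhsps{\tu}\normhs{\bar v}^{2}$, whereas you first commute $J^{s}$ past $\tu_{j}$ and then integrate by parts, arriving at the slightly sharper $\normhs{\tu}\normhs{\bar v}^{2}$; both exploit $\nb\cdot\bar v=0$ in the same way and the distinction is immaterial once \eqref{est:tu} is invoked.
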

	\begin{proof}
		Denoting $\tu=u-\bu$ and $\dusu=\busu-\bu$, we obtain the equation of $\dusu$:
		\begin{equation*}
			\left\{\begin{split}
				&\rd_t \dusu 
				+\tu \cdot \nb \dusu + \bu \cdot \nb \dusu + \tu \cdot \nb \bu -\nb \dusu \cdot \tu + \nb \dusu \cdot \dusu - \nb \bu \cdot \tu + \nb \bu \cdot \dusu + \nb (\bar{p}^*-\bar{p})=0, \\
				&\nb \cdot \dusu =0,\\
				&\dusu(t=0)=0.
			\end{split}
			\right.
		\end{equation*}
		This gives
		\begin{equation*}
			\begin{split}
				\frac12 \frac{d}{dt} \normhs{\dusu}^2& = -\int J^{s}(\tu\cdot\nb \dusu)\cdot J^{s}\dusu
				-\int J^{s}(\bu \cdot \nb \dusu)\cdot J^{s}\dusu  -\int J^{s}(\tu \cdot \nb \bu)\cdot J^{s}\dusu  \\
				&\quad +\int J^{s}(\nb \dusu \cdot \tu)\cdot J^{s}\dusu
				-\int J^{s}(\nb \dusu \cdot \dusu)\cdot J^{s}\dusu
				+\int J^{s}(\nb \bu \cdot \tu)\cdot J^{s}\dusu \\
				&\quad -J^{s}(\nb \bu \cdot \dusu)\cdot J^{s}\dusu
				-\int J^{s}\nb(\tilde{p}-\Tilde{p}^{Lin})\cdot J^{s}\dusu \\
				&=\mrI+\mrII+\mrIII+\mrIV+\mrV + \mrVI + \mrVII + \mrVIII.
			\end{split}
		\end{equation*}
		For $\mrI$, $\mrII$, and $\mrVIII$, we proceed in the same way as the proof of Proposition \ref{prop: perturbation} to obtain
		\begin{equation*}
			\mrI \lesssim \normhs{\tu}\normhs{\dusu}^2, \qquad \mrII \lesssim \normhs{\bu}\normhs{\dusu}^2, \qquad \mrVIII =0.
		\end{equation*}
		For $\mrIII+\mrVI+\mrVII$, we use the fact that $H^{s+1}{(\mathbb{T}^3)}$ is a Banach algebra to have
		\begin{equation*}
			\begin{split}
				\mrIII+\mrVI+\mrVII %&\lesssim \left(\normhs{\tu\cdot\nb \bu} +\normhs{\nb \bu \cdot \tu}    +\normhs{\nb \bu \cdot \dusu}\right) \normhs{\dusu} \\ &
				\lesssim \left(\normhs{\tu} + \normhs{\dusu}\right)\normhsps{\bu}\normhs{\dusu}.
			\end{split}
		\end{equation*}
		For $\mrIV$ and $\mrV$, $\nb \cdot\dusu=0$ and the integration by parts give us
		\begin{equation*}
			\mrIV= -\int J^{s}( \dusu_j \rd_i \tu_j) J^{s}\dusu_i \lesssim \normhsps{\tu}\normhs{\dusu}^2, \qquad 
			\mrV= -\frac12\int J^{s}\rd_i |\dusu|^2 J^{s}\dusu_i=0.
		\end{equation*}
		Combining all, we arrive at
		\begin{equation*}
			\frac{d}{dt} \normhs{\dusu} 
			\le C \left(\normhsps{\tu} + \normhsps{\bu} \right)\normhs{\dusu} + \normhsps{\tu} \normhsps{\bu}.
		\end{equation*}
		Thus, for $\varepsilon>0$ satisfying \eqref{condi: eps}, the Gr\"onwall's inequality and \eqref{est:tu} yield
		\begin{equation*}
			\begin{split}
				\normhs{\dusu(t)} 
				&\le \exp \left(C\int_0^{t} \left(\normhsps{\tu(\tau)} + \normhsps{\bu(\tau)} \right)d\tau \right) C \int_0^t \normhsps{\tu(\tau)} \normhsps{\bu(\tau)} d\tau\\
				&\le 2\varepsilon\normhsps{\tu_0}C\bar{T}\left(\sup_{t\in[0,\bar{T}]}\normhsps{\bu(t)}\right)\exp\left(2C\int_0^{\Bar{T}}\left(1+ \normhspps{\bu(\tau)}\right)d\tau \right)
			\end{split}
		\end{equation*}
		on $[0,\Bar{T}]$.
	\end{proof}

	\section{Proof of the main result}
	The aim of this section is to show Theorem \ref{thm: presence}. In the proof, we shall always assume that initial data $\iw_0$ and $\sw_0$ satisfy \eqref{def: iw_0} and \eqref{def: sw_0}, respectively. We note that by our definitions of $\iw$ and $\sw$, they solve the following coupled system: $ (\iw,\sw)(t=0) = (\iw_0,\sw_0)$ and
		\begin{equation}\label{eq:euler-l-vorticity-original}
			\left\{
			\begin{aligned}
				\rd_{t} \iw + (\lu + \iu+\su) \cdot \nb \iw &= \nb (\lu+\iu+\su) \iw, \\
				\rd_{t} \sw + (\lu+\iu+\su) \cdot \nb \sw &= \nb (\lu+\iu+\su) \sw, \\
				(\iu,\su) &=\nb \times (-\Dlt)^{-1}(\iw,\sw).\\
			\end{aligned}
			\right.
		\end{equation}
		Recalling the definition of $\lu$ in \eqref{eq:u-L}, we have $\lw:=\nb \times \lu=0$  {in $|x|\le 10$}. Thus, setting $\busu:=\lu+\iu$ and $u:=\lu+\iu+\su$, and noticing 
		\begin{equation*}
			\begin{split}
				\nb \times \left( u \cdot \nb \busu -\nb \busu \cdot (u-\busu)\right)
				&= \nb \times \left( \busu \cdot \nb \busu + (u-\busu) \cdot \nb \busu -\nb \busu \cdot (u-\busu)\right) \\
				&=\nb \times \left( (\bar{\omg}^* \times \busu) +\frac{1}{2}\nabla |\busu|^2 + \bar{\omg}^* \times (u-\busu)\right) \qquad (\bar{\omg}^*:= \nb \times \busu) \\
				&= \busu \cdot \nb \bar{\omg}^*- \nb \busu \bar{\omg}^* + (u-\busu) \cdot \nb \bar{\omg}^*-\nb (u-\busu) \bar{\omg}^*  =u\cdot\nb \bar{\omg}^*-\nb u \,\bar{\omg}^*,
			\end{split}
		\end{equation*}
		we can check that $\busu$ and $u$ solve \eqref{eq:euler busu} and \eqref{eq:euler just u}, respectively.
		On the other hand, we shall introduce pseudo-solutions $(\piw,\psw)$ as the solutions of
		\begin{equation}\label{eq:euler-l-vorticity-i}
			\left\{
			\begin{aligned}
				\rd_{t} \piw + (\lu + \piu) \cdot \nb \piw &= \nb (\lu+\piu) \piw, \\
				\piu &=\nb \times (-\Dlt)^{-1}\piw,
			\end{aligned}
			\right.
		\end{equation}
		and
		\begin{equation}\label{eq:euler-l-vorticity-s}
			\left\{
			\begin{aligned}
				\rd_{t} \psw + (\lu+\piu+\psu) &\cdot \nb \psw  = \nb (\lu+\piu+\psu) \psw + \nb \psu \piw -\psu\cdot \nb \piw, \\
				\psu &=\nb \times (-\Dlt)^{-1}\psw,  
			\end{aligned}
			\right.
		\end{equation} with initial data  $\piw(t=0) = \iw_0$ and $ \psw(t=0) = \sw_0$.
		Then  we can check that $\bu:=\lu+\piu$ and $u:=\lu+\piu+\psu$ are solutions of \eqref{eq:euler bu} and \eqref{eq:euler just u}, respectively. Note that $u=\lu+\piu+\psu=\lu+\iu+\su$, which implies $\piu-\iu=\su-\psu$.
		Our strategy is first to analyze pseudo-solutions $(\piw,\psw)$, and then to compare them with real solutions $(\iw,\sw)$ using the principles introduced in the previous section.  
	 
	\subsection{Behavior  of $\piw$ and $\nb \piu$}
	Abusing the notation for simplicity, we denote pseudo-solutions $\piw$ and $\piu$ by $\iw$ and $\iu$, respectively.
	We note that the the symmetry $\iw(t,x_1,x_2,x_3)=(\iw_{1}(t,x_2,x_3),0,0)$ propagates in time for the solutions to \eqref{eq:euler-l-vorticity-i}. Namely, the assumptions
	\begin{equation}\label{condi: iw}
		\rd_1 \iw_1 \equiv 0,\quad \iw_2\equiv 0, \quad \iw_3 \equiv 0
	\end{equation}
	hold for all times if they are valid at $t=0$. This is not trivial since $\lu$ depends on $x_1$. To see this, first note that \eqref{condi: iw} implies 
	\begin{equation}\label{condi: iu}
		\iu_1 \equiv 0,\quad \rd_1\iu_2\equiv 0, \quad \rd_1\iu_3 \equiv 0
	\end{equation}
	by the Biot-Savart law. Together this implies $\nb \iu \iw \equiv 0$. Denoting $D_t=\rd_t + (\lu + \iu)\cdot \nb$, from
	\begin{equation*}
		D_t(\iw_2)=(\nb\iu \iw)_2 - M\iw_2, \quad D_t(\iw_3)=(\nb\iu \iw)_3,
	\end{equation*}
	we see that $(\nb \iu \iw)\equiv 0$ is consistent with $\iw_2,\iw_3$ being zero for all times. Lastly,
	\begin{equation*}
		\begin{aligned}
			D_t(&\iw_1)=(\nb\iu \iw)_1 + M\iw_1,\qquad D_t(\rd_1\iw_1)=\rd_1((\nb\iu \iw)_1),
		\end{aligned}
	\end{equation*}
	which shows that $\rd_1\iw_1 \equiv 0$ propagates in time.
	This shows that the equation for $\iw_1$ is given by
	\begin{equation}\label{eq:reduced iw}
		\rd_t \iw_1 + (\iu_2-Mx_2)\rd_2\iw_1 +\iu_3\rd_3\iw_1 = M\iw_1.
	\end{equation}
	Comparing \eqref{eq:reduced iw} with 2D Euler equation which has global well-posedness of smooth solutions, we can check that there exists the unique global smooth solution $\iw_1$ of \eqref{eq:reduced iw} with initial data given in \eqref{def: iw_0}.  Furthermore, we can also observe that $\iw_1$ keeps the odd symmetry in both $x_2$ and $x_3$.
	
	To begin with, we observe temporal behaviors of $\normp{\iw(t,\cdot)}$ for $p\in[1,\infty]$.
	\begin{lemma}
		For $t\in[0,\infty)$, $\iw(t,\cdot)=(\iw_1(t,\cdot),0,0)$ satisfies
		\begin{equation}\label{est: iw}
			\normp{\iw_1(t,\cdot)}  { = \normp{\iw_{0}}}e^{\frac{M(p-1)}{p}t}\;(1\le p <\infty)\quad \text{and} \quad \normif{\iw_1(t,\cdot)}  {=\normif{\iw_{0}}} e^{Mt}.
		\end{equation}
	\end{lemma}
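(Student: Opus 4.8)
The statement asks to show that for the reduced 2D-type equation
\begin{equation*}
\rd_t \iw_1 + (\iu_2-Mx_2)\rd_2\iw_1 +\iu_3\rd_3\iw_1 = M\iw_1,
\end{equation*}
with $\iw_1(0,\cdot)=\iw_{1,0}$, one has $\normp{\iw_1(t,\cdot)} = \normp{\iw_0}e^{M(p-1)t/p}$ for finite $p$ and $\normif{\iw_1(t,\cdot)} = \normif{\iw_0}e^{Mt}$. The key observation is that the advecting velocity field $v:=(\iu_2-Mx_2,\iu_3)$ in the $(x_2,x_3)$-plane is divergence-free: indeed $\nb\cdot\iu=0$ together with $\iu_1\equiv0$ and $\rd_1\iu_2\equiv\rd_1\iu_3\equiv0$ (from \eqref{condi: iu}) gives $\rd_2\iu_2+\rd_3\iu_3=0$, and $\rd_2(-Mx_2)+\rd_3(0)=-M$... wait, that is not zero.

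Let me reconsider. The plan is as follows.

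\textbf{Plan.} First I would recast the equation along characteristics. Let $\Phi_t$ be the flow map in the $(x_2,x_3)$-plane generated by the velocity field $V(t,x_2,x_3):=(\iu_2(t,x_2,x_3)-Mx_2,\ \iu_3(t,x_2,x_3))$; since $\iu$ is smooth (from the global well-posedness of \eqref{eq:reduced iw} established just above) and $\Phi_t$ is a diffeomorphism of $\bbT^2$ for each $t$. Along characteristics the PDE reads $\frac{d}{dt}\big(\iw_1(t,\Phi_t(y))\big)=M\,\iw_1(t,\Phi_t(y))$, hence
\begin{equation*}
\iw_1(t,\Phi_t(y)) = e^{Mt}\,\iw_{1,0}(y).
\end{equation*}
This immediately gives $\normif{\iw_1(t,\cdot)}=e^{Mt}\normif{\iw_{1,0}}$, since $\Phi_t$ is a bijection.

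\textbf{The Jacobian computation.} For the $L^p$ norms with $p<\infty$ I would change variables $x=\Phi_t(y)$:
\begin{equation*}
\int_{\bbT^2}|\iw_1(t,x)|^p\,dx = \int_{\bbT^2}|\iw_1(t,\Phi_t(y))|^p\,|\det D\Phi_t(y)|\,dy = e^{Mpt}\int_{\bbT^2}|\iw_{1,0}(y)|^p\,|\det D\Phi_t(y)|\,dy.
\end{equation*}
The main point is therefore to compute $\det D\Phi_t$. By the Liouville/Euler identity $\frac{d}{dt}\det D\Phi_t(y) = (\nb\cdot V)(t,\Phi_t(y))\,\det D\Phi_t(y)$, and $\nb\cdot V = \rd_2\iu_2+\rd_3\iu_3 - M = (\nb\cdot\iu) - M = -M$ using $\iu_1\equiv0$ from \eqref{condi: iu}. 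Hence $\det D\Phi_t(y)=e^{-Mt}$ identically, and plugging in yields $\normp{\iw_1(t,\cdot)}^p = e^{Mpt}e^{-Mt}\normp{\iw_{1,0}}^p$, i.e. $\normp{\iw_1(t,\cdot)}=e^{M(p-1)t/p}\normp{\iw_{1,0}}$. Finally I would note consistency: letting $p\to\infty$ recovers the $L^\infty$ rate $e^{Mt}$, and the full vorticity $\iw=(\iw_1,0,0)$ so $\normp{\iw(t,\cdot)}=\normp{\iw_1(t,\cdot)}$, matching \eqref{est: iw}.

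\textbf{Main obstacle.} There is no serious analytic obstacle here — the proof is essentially a characteristics-plus-Jacobian computation — but one must be careful about two bookkeeping points: (i) that the $(x_2,x_3)$-flow is well-defined globally in time, which is exactly the global regularity of \eqref{eq:reduced iw} already asserted (the velocity $\iu_2,\iu_3$ being as regular as a 2D-Euler velocity), and (ii) that the compressibility comes entirely from the $-Mx_2$ term while $\iu$ itself is incompressible in the plane, so the divergence of $V$ is the constant $-M$ rather than something $x$-dependent; this is what makes $\det D\Phi_t$ spatially constant and the $L^p$ identities exact equalities rather than mere bounds. I would also remark that the exact conservation structure (equality, not inequality) relies on $\iw_{1,0}\ge 0$ not being needed — only on the characteristic ODE being linear — so the claim holds verbatim with the stated constants.
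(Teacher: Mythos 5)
Your proof is correct, but it takes a genuinely different route from the paper. The paper's argument is Eulerian: it pairs the transport equation \eqref{eq:reduced iw} against $\iw_1|\iw_1|^{p-2}$, integrates by parts so that the $(\iu_2\rd_2+\iu_3\rd_3)$ term vanishes by 2D incompressibility while the $-Mx_2\rd_2$ term yields $-\tfrac{M}{p}\normp{\iw_1}^p$, and then solves the resulting ODE $\tfrac{d}{dt}\normp{\iw_1}^p = M(p-1)\normp{\iw_1}^p$, sending $p\to\infty$ at the end. Your argument is Lagrangian: you push forward along the flow of $V=(\iu_2-Mx_2,\iu_3)$, use $\iw_1(t,\Phi_t(y))=e^{Mt}\iw_{1,0}(y)$, compute $\nb\cdot V=-M$ (hence $\det D\Phi_t\equiv e^{-Mt}$ by Liouville), and change variables in the $L^p$ integral. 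The two are of course dual to each other, and both are correct. A pleasant feature of your route is that it makes the $L^\infty$ case immediate (just a bijection of the spatial variable), whereas the paper obtains $L^\infty$ as the $p\to\infty$ limit; on the other hand the paper's integration-by-parts computation is entirely self-contained and avoids the need to define the flow map at this stage. It is worth noting that your two Lagrangian ingredients are precisely what the paper establishes a few lines later as Lemma \ref{determinant of iA} (the Jacobian identity $\det(\nb_a\iA)=e^{-Mt}$) and equation \eqref{value: iw}, so in a sense you are deriving the present lemma from facts the paper proves anyway — a reasonable reorganization.

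One small presentational remark: the opening lines of your writeup contain a false start ("\emph{the advecting velocity field $\ldots$ is divergence-free $\ldots$ wait, that is not zero}"). You do catch the error — the planar divergence of $V$ is the constant $-M$, not zero — and the final argument is right, but the corrected statement should replace, not follow, the incorrect one in a clean writeup. Also be slightly careful with the claim that $\Phi_t$ is a diffeomorphism "of $\bbT^2$": the drift $-Mx_2$ is not periodic, so strictly speaking the change-of-variables identity should be read on the support of $\iw_1$, which for the relevant times stays inside the region $|x|\le 10$ where \eqref{eq:u-L} is in force; the paper's Eulerian computation quietly relies on the same localization.
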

	\begin{proof}
		Taking the $L^2$ inner product of \eqref{eq:reduced iw} with $\iw_1|\iw_1|^{p-2}$, we have
		\begin{equation*}
			\begin{split}
				\frac{1}{p}\frac{d}{dt}\normp{\iw_1}^p &= -\int (\iu_2\rd_2\iw_1 + \iu_3\rd_3\iw_1)\iw_1|\iw_1|^{p-2}
				+\int Mx_2\rd_2\iw_1\iw_1|\iw_1|^{p-2} + M\normp{\iw_1}^p.
			\end{split}
		\end{equation*}
		After the integration by parts, the first integral vanishes since $\iu_1=0$ and $\nb \cdot \iu =0$, and the second integral is equal to $-\frac{M}{p}\normp{\iw_1}^p$. Thus we have
		\begin{equation*}
			\begin{split}
				\frac{d}{dt}\normp{\iw_1}^p &= M(p-1)\normp{\iw_1}^p, \quad \mbox{    which implies}         \qquad    \normp{\iw_1(t,\cdot)}  = e^{\frac{M(p-1)}{p}t}\normp{\iw_{1,0}}.
			\end{split}
		\end{equation*} 
		Passing $p\rightarrow \infty$, we can also obtain $L^{\infty}$ estimate.
	\end{proof}
	
	Next, using \eqref{est: iw}, we estimate $\nrmb{\iu(t,\cdot)}_{H^s}$ with $s>\frac{5}{2}$ for $t\in[0,T_M]$ with $T_M=\frac{\log(M+1)}{M}$. This estimate will be used in Section \ref{comparison}.
	\begin{lemma}\label{lem: iu-Hs-bound}
		For $s>\frac52$ and $t\in[0,T_M]$, $\normhsr{\iu(t,\cdot)}\lesssim e^{ {C}Mt}$.
	\end{lemma}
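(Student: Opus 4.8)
The plan is to reduce everything to a two–dimensional problem for the scalar $\iw_{1}$, pass to the velocity formulation in order to get a clean energy estimate, and then close a Gr\"onwall inequality that exploits the tuning $MT_{M}=\log(1+M)$.

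\emph{Step 1 (reduction to 2D).} By \eqref{condi: iw}--\eqref{condi: iu}, $\iw(t,\cdot)=(\iw_{1}(t,x_{2},x_{3}),0,0)$ and $\iu(t,\cdot)=(0,u(t,x_{2},x_{3}))$, where $u=(\iu_{2},\iu_{3})$ is the two–dimensional Biot--Savart velocity of $\iw_{1}$ on $\bbT^{2}_{x_{2},x_{3}}$ (and $\int_{\bbT^{2}}\iw_{1}=0$ is preserved by \eqref{eq:reduced iw}). Since everything is $x_{1}$–independent, elliptic regularity gives $\normhsr{\iu(t,\cdot)}\lesssim\nrm{u(t,\cdot)}_{H^{s+1}(\bbT^{2})}$, so it suffices to bound $\nrm{u(t,\cdot)}_{H^{s+1}(\bbT^{2})}$ on $[0,T_{M}]$. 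Using the planar Biot--Savart bound $\nrm{u(t,\cdot)}_{L^{\infty}}\lesssim\nrm{\iw_{1}(t,\cdot)}_{L^{1}}^{1/2}\nrm{\iw_{1}(t,\cdot)}_{L^{\infty}}^{1/2}\lesssim e^{Mt/2}$ (here $\nrm{\iw_{1}(t,\cdot)}_{L^{1}}=\nrm{\iw_{1,0}}_{L^{1}}$ and $\nrm{\iw_{1}(t,\cdot)}_{L^{\infty}}=\nrm{\iw_{1,0}}_{L^{\infty}}e^{Mt}$ by \eqref{est: iw}) and the exponential contraction in $x_{2}$ coming from the $-Mx_{2}\rd_{2}$ drift, one checks along the characteristics of \eqref{eq:reduced iw} that for $M\gg1$ the support of $\iw_{1}$ stays in a fixed compact subset of $\bbT^{2}$ away from $\rd\bbT^{2}$ on $[0,T_{M}]$; hence we may replace the nonperiodic multiplier $x_{2}$ by a smooth $2L$–periodic $\chi$ with $\chi(x_{2})=x_{2}$ near that set without changing $\iw_{1}$. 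Writing $v=u-M\chi\,e_{2}$, we have $\rd_{t}\iw_{1}+v\cdot\nb\iw_{1}=M\iw_{1}$ with $\nb\cdot v=-M\chi'$ and $\operatorname{curl}v=\iw_{1}$.

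\emph{Step 2 (velocity formulation and energy estimate).} Applying $\nb^{\perp}(-\lap)^{-1}$ to the $\iw_{1}$–equation shows that $u$ solves $\rd_{t}u+v\cdot\nb u+\nb q=\mathcal F$ with $\nb\cdot u=0$ and $\mathcal F=-Mu-M\nb^{\perp}(-\lap)^{-1}(\chi'\rd_{2}u_{3})+\nb(\cdot)$; since the operators hitting $u$ there have order $\le0$ ($\chi$ being fixed and smooth), $\nrm{\mathcal F}_{H^{s+1}}\lesssim M\nrm{u}_{H^{s+1}}$. Now apply $J^{s+1}$, $J=(I-\lap)^{1/2}$, and pair with $J^{s+1}u$: the pressure drops out; the $-M\chi\,e_{2}$ part of $v$ contributes $\lesssim M\nrm{u}_{H^{s+1}}^{2}$ (from $\nb\cdot v=-M\chi'$ and a commutator with the smooth $\chi$, which loses no derivative); $\mathcal F$ contributes $\lesssim M\nrm{u}_{H^{s+1}}^{2}$; and the $u\cdot\nb u$ part, after discarding $\langle u\cdot\nb J^{s+1}u,J^{s+1}u\rangle=0$, leaves the commutator $\langle[J^{s+1},u\cdot\nb]u,\,J^{s+1}u\rangle\lesssim\nrm{\nb u}_{L^{\infty}}\nrm{u}_{H^{s+1}}^{2}$ — it is here that using the \emph{velocity} (not vorticity) form matters, since then the Kato--Ponce estimate closes with the single factor $\nrm{\nb u}_{L^{\infty}}$. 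Altogether,
\begin{equation*}
\tfrac{d}{dt}\nrm{u(t,\cdot)}_{H^{s+1}}\le C\big(M+\nrm{\nb u(t,\cdot)}_{L^{\infty}}\big)\nrm{u(t,\cdot)}_{H^{s+1}}.
\end{equation*}

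\emph{Step 3 (closing via the short time horizon).} By the logarithmic Beale--Kato--Majda inequality in 2D, $\nrm{\nb u}_{L^{\infty}}\lesssim 1+\nrm{\iw_{1}}_{L^{2}}+\nrm{\iw_{1}}_{L^{\infty}}\log(e+\nrm{\iw_{1}}_{H^{s}})$ with $\nrm{\iw_{1}(t,\cdot)}_{H^{s}}\approx\nrm{u(t,\cdot)}_{H^{s+1}}$; bounding $\nrm{\iw_{1}(t,\cdot)}_{L^{2}}\lesssim e^{Mt/2}$ and $\nrm{\iw_{1}(t,\cdot)}_{L^{\infty}}\lesssim e^{Mt}$ by \eqref{est: iw} and setting $Y(t)=\log(e+\nrm{u(t,\cdot)}_{H^{s+1}})$, Step 2 gives $Y'(t)\le CM+Ce^{Mt/2}+Ce^{Mt}Y(t)$. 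The decisive observation is that $\int_{0}^{T_{M}}e^{M\tau}d\tau=\frac{e^{MT_{M}}-1}{M}=1$, so the integrating factor $\exp(-C\int_{0}^{t}e^{M\tau}d\tau)$ stays in a fixed range $[e^{-C},1]$ on $[0,T_{M}]$; Gr\"onwall then yields $Y(t)\le C_{1}+C_{2}Mt$ there, i.e. $\nrm{u(t,\cdot)}_{H^{s+1}}\lesssim e^{CMt}$, whence $\normhsr{\iu(t,\cdot)}\lesssim e^{CMt}$. (Combined with local well–posedness of \eqref{eq:euler-l-vorticity-i}, this a priori bound also propagates smoothness up to $T_{M}$.)

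The main obstacle is the interplay of Steps 2--3: the planar self–advection necessarily feeds $\nrm{\nb u}_{L^{\infty}}$, hence a logarithm of the top Sobolev norm, into the Gr\"onwall inequality, and what keeps this from producing, say, double–exponential growth in $M$ is exactly the tuning $T_{M}=M^{-1}\log(1+M)$, which makes the time integrals over $[0,T_{M}]$ of the slowly growing low norms in \eqref{est: iw} of size $O(1)$. A secondary technical point is that the vorticity formulation would leave an extra commutator term, so one should work with the velocity $u$ and verify that the forcing $\mathcal F$ created by the linear drift $-Mx_{2}\rd_{2}$ costs no derivatives.
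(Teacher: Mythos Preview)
Your argument is correct, but it takes a more circuitous route than the paper's. The paper observes that $\iu$ itself solves the 3D Euler-type equation
\[
\partial_t\iu+(\lu+\iu)\cdot\nabla\iu+\nabla p^{\mathcal I}=0
\]
on $\bbT^3$, and simply performs the $H^s(\bbT^3)$ energy estimate there: the pressure drops, the $\lu$-transport term gives $\lesssim M\nrm{\iu}_{H^s}^2$ by Kato--Ponce together with $\nrm{\lu}_{H^s(\bbT^3)}\lesssim M$ (recall $\lu$ is a fixed smooth periodic vector field scaled by $M$), and the self-advection gives $\lesssim\nrm{\nabla\iu}_{L^\infty}\nrm{\iu}_{H^s}^2$. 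This lands directly on your inequality $\frac{d}{dt}\nrm{\iu}_{H^s}\lesssim(M+\nrm{\nabla\iu}_{L^\infty})\nrm{\iu}_{H^s}$, after which your Step~3 and the paper's closure are identical.

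By contrast, you first reduce to 2D, which forces you to confront the non-periodicity of $-Mx_2$, hence the cutoff $\chi$, hence the support-tracking argument, hence the derivation of the forced velocity equation and the verification that $\mathcal F$ is order zero in $u$. All of this is valid (modulo a harmless sign slip in $\mathcal F$: it should be $+Mu$, not $-Mu$), but none of it is needed: staying in 3D lets the smooth periodic $\lu$ absorb the linear drift without any cutoff, and the commutator $[J^s,\lu\cdot]\nabla\iu$ is handled in one line. The upshot is that your extra machinery buys nothing beyond what the direct 3D estimate already gives; the only genuinely nontrivial point---that $\int_0^{T_M}e^{M\tau}d\tau=1$ keeps the logarithmic Gr\"onwall from blowing up---is common to both proofs, and you identify it correctly.
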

	\begin{proof}
		Fix $s>\frac{5}{2}$.
		Noticing that $\iu$ solves
		\begin{equation*}
			\left\{
			\begin{aligned}
				\rd_t \iu + (\lu + \iu) \cdot \nb \iu + \nb p^{\mathcal{I}} = 0 ,  \\
				\nb\cdot \iu = 0.
			\end{aligned}
			\right.
		\end{equation*}
		and denoting $J=(I-\Delta)^{\frac{1}{2}}$, we have
		\begin{equation*}
			\begin{split}
				\frac12 \frac{d}{dt} \normhs{\iu}^2& = -\int J^{s}(\lu\cdot\nb \iu)\cdot J^{s}\iu
				-\int J^{s}(\iu\cdot\nb \iu)\cdot J^{s}\iu -\int J^{s}\nb p^{\mathcal{I}}\cdot J^{s}\iu =\mrI + \mrII + \mrIII.
			\end{split}
		\end{equation*}
		 {$\mrIII=0$ follows from $\nb \cdot \iu=0$.}  {Noticing} $ \int \left(\lu \cdot \nb J^{s}\iu\right) \cdot J^{s} \iu= \int \left(\iu \cdot \nb J^{s}\iu\right) \cdot J^{s} \iu =0,$
		we obtain
		\begin{equation*}
			\mrI = -\int \left(\left[J^{s}, \lu \cdot\right] \nb \iu\right) \cdot J^{s}\iu
			\quad \text{and} \quad \mrII = -\int \left(\left[J^{s}, \iu \cdot\right] \nb \iu\right) \cdot J^{s}\iu, 
		\end{equation*} where we recall that $[\cdot, \cdot]$ denotes the commutator. Using $\nrmb{\lu}_{H^{s}(\bbT^3)}\lesssim M$ and the Sobolev embedding $H^s(\bbT^3) \hookrightarrow W^{1,\infty}(\bbT^3)$, we obtain
		\begin{equation*}
			\mrI \lesssim \normb{\left[J^{s}, \lu \cdot\right] \nb \iu}\normb{J^{s}\iu}
			\lesssim M\normhs{\iu}^2, \quad \mrII \lesssim \normb{\left[J^{s}, \iu \cdot\right] \nb \iu}\normb{J^{s}\iu}
			\lesssim \normif{\nabla\iu}\normhs{\iu}^2,
		\end{equation*}
		 {which lead to}
		\begin{equation*}
			\frac{d}{dt} \normhs{\iu} \lesssim \left(M+ \normif{\nabla\iu}\right)\normhs{\iu}.
		\end{equation*}
		According to the Calderon-Zygmund theory, we have
		\begin{equation*}
			\normifr{\nabla\iu} \lesssim \normifr{\iw}\log \left(10 + \frac{\nrmb{\iu}_{H^s(\bbT^3)}}{\normifr{\iw}} \right)\lesssim e^{Mt}\log \left(10 + \normhs{\iu} \right),
		\end{equation*}
		where we used \eqref{est: iw} in the last inequality. 
		This gives
		\begin{equation*}
			\frac{d}{dt}\log \left(10 + \normhs{\iu}\right) \lesssim M+ e^{Mt}\log \left(10 + \normhs{\iu}\right).
		\end{equation*}
		 {Using Gr\"onwall's inequality, we obtain the desired estimate on $[0,T_M]$.}
	\end{proof}

	Our next aim is to estimate the maximum of $\nb \iu(t,\cdot)$ in a small region near the origin up to time $T_M=\frac{\log(M+1)}{M}$.
	The corresponding velocity $\iu=(0,\iu_2,\iu_3)$ to $\iw=(\iw_1,0,0)$ has explicit formula:
	\begin{equation}\label{formula: iu}
		\begin{split}
			\iu_2(t,x_2,x_3)=\frac{1}{2\pi}\sum_{n=(n_2,n_3)\in \mathbb{Z}^2}\iint_{[-L,L)^2} \frac{-x_{3}+y_3+2Ln_3}{|x-y-2Ln|^2}\iw_1(t,y_2,y_3)dy_2dy_3, \\
			\iu_3(t,x_2,x_3)=\frac{1}{2\pi}\sum_{n=(n_2,n_3)\in \mathbb{Z}^2}\iint_{[-L,L)^2} \frac{x_2-y_2-2Ln_2}{|x-y-2Ln|^2}\iw_1(t,y_2,y_3)dy_2dy_3.
		\end{split}
	\end{equation}
	Using this, we can prove a log-Lipschitz estimate of $\iu$.
	\begin{lemma}\label{lemma: iu log-lipschitz}
		Let $x,x'\in \mathbb{T}^2:=[-L,L)^2$. Then we have
		\begin{equation}\label{est: iu}
			\left|\iu(t,x)-\iu(t,x')\right| \lesssim  e^{tM}|x-x'|\left(1+\log \frac{3L}{|x-x'|} \right).
		\end{equation}
	\end{lemma}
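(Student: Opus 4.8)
The plan is to estimate the difference $\iu(t,x)-\iu(t,x')$ directly from the Biot--Savart formula \eqref{formula: iu}, treating it as a classical log-Lipschitz bound for a 2D velocity field whose vorticity is bounded in $L^1\cap L^\infty$ (with norms controlled via \eqref{est: iw}). First I would split the lattice sum in \eqref{formula: iu} into the principal term $n=(0,0)$ and the remainder $n\neq (0,0)$. For the remainder, since $x,x'\in[-L,L)^2$ and $|y|\le \sqrt{2}L$, the kernel $\nabla^\perp_x \log|x-y-2Ln|$ and its $x$-gradient are bounded uniformly in $y$ by $C|n|^{-1}$ and $C|n|^{-2}$ respectively for $|n|\ge 1$; summing against $\|\iw_1(t,\cdot)\|_{L^1(\bbT^2)}\lesssim e^{\frac{M}{\,}\cdot 0\cdot t}=\|\iw_{1,0}\|_{L^1}$ — actually $\|\iw_1(t)\|_{L^1}=\|\iw_{1,0}\|_{L^1}$ by the $p=1$ case of \eqref{est: iw}, which is even time-independent — gives a contribution bounded by $C|x-x'|$, which is absorbed into the right-hand side of \eqref{est: iu}. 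So the whole point is the principal term.

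For the principal term, I would use the standard potential-theoretic argument: with $K(z)=\frac{1}{2\pi}z^\perp/|z|^2$ (the 2D Biot--Savart kernel), one has
\begin{equation*}
	\iu(t,x)-\iu(t,x') = \int_{[-L,L)^2}\bigl(K(x-y)-K(x'-y)\bigr)\iw_1(t,y)\,dy + (\text{lattice remainder}).
\end{equation*}
Let $\delta=|x-x'|$. Split the $y$-integral into the near region $\{|x-y|\le 2\delta\}$ and the far region. On the near region, bound each of $|K(x-y)|$ and $|K(x'-y)|$ by $C/|x-y|$ and $C/|x'-y|$ and integrate (the singularity $|z|^{-1}$ is integrable in 2D), picking up $C\delta\|\iw_1(t)\|_{L^\infty}$. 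On the far region, use the mean value inequality $|K(x-y)-K(x'-y)|\le C\delta/|x-y|^2$ valid when $|x-y|\ge 2\delta$, and integrate $|x-y|^{-2}$ over $2\delta\le |x-y|\le \mathrm{diam}\lesssim L$ to obtain $C\delta\log(L/\delta)\|\iw_1(t)\|_{L^\infty}$. Adding the two regions and using $\|\iw_1(t,\cdot)\|_{L^\infty}=e^{Mt}\|\iw_{1,0}\|_{L^\infty}$ from \eqref{est: iw} yields
\begin{equation*}
	|\iu(t,x)-\iu(t,x')| \le C e^{Mt}\,\delta\Bigl(1+\log\frac{3L}{\delta}\Bigr),
\end{equation*}
which is \eqref{est: iu}; the constant $C$ depends only on $L$ and $\varphi$ (through $\|\iw_{1,0}\|_{L^1\cap L^\infty}$). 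One should also handle the trivial case $\delta\ge L$ (or $\delta$ comparable to $L$) separately, where the estimate follows just from the $L^\infty$ bound on $\iu$ itself, so that the logarithm stays nonnegative.

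I do not expect a serious obstacle here — this is the classical Yudovich-type log-Lipschitz estimate, and the only mild wrinkles are (i) keeping careful track of the periodic lattice sum, which converges absolutely once one differentiates the kernel once, and (ii) carrying the time-dependent factor $e^{Mt}$ through, which is immediate from the already-established $L^\infty$ bound \eqref{est: iw} on $\iw_1$. The choice $3L$ rather than $L$ inside the logarithm is just a convenient way to ensure the argument of the log is $\ge 3$ (hence the log is positive and dominates the additive constant) over the full range $0<\delta\le \mathrm{diam}(\bbT^2)$.
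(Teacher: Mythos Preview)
Your approach is the standard one, and in fact the paper omits the proof entirely, simply citing the classical 2D log-Lipschitz estimate (Majda--Bertozzi). Your near/far decomposition for the principal term $n=(0,0)$ is exactly right and produces the claimed bound once you feed in $\nrm{\iw_1(t,\cdot)}_{L^\infty}\lesssim e^{Mt}$ from \eqref{est: iw}.

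There is, however, a gap in your treatment of the lattice remainder $n\neq 0$: you bound $|\nabla_x K(x-y-2Ln)|\lesssim |n|^{-2}$ and then sum, but $\sum_{n\in\bbZ^2\setminus\{0\}}|n|^{-2}$ \emph{diverges} logarithmically in two dimensions, so the bound on the remainder is infinite as written. The cleanest repair is to observe that the periodic Biot--Savart kernel on $\bbT^2$ equals the free-space kernel $K$ plus a function that is $C^\infty$ on all of $\bbT^2$ (since the periodic Green's function has this structure); the smooth part then contributes a genuinely Lipschitz term $C|x-x'|$, with $C$ depending only on $L$. Alternatively, use that $\iw_1$ has zero mean (from its odd symmetry): subtracting $K(x-2Ln)\int\iw_1\,dy=0$ inside each lattice integral gains an extra factor of $|y|/|n|$ via the mean-value theorem, upgrading the decay of the $x$--difference to a summable $|n|^{-3}$. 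Either fix is routine, and your overall structure is sound.
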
 Note that the argument of the logarithm in \eqref{est: iu} is always greater than 1 because $|x-x'|\le 2\sqrt{2}L$. We omit the proof since it follows directly from the standard log-Lipschitz estimate for 2d Euler (see for instance \cite{MB}).

	Now we consider a characteristic curve $A^{\mathcal{I}}(t,a_2,a_3)=\left(A^{\mathcal{I}}_2(t,a_2,a_3),A^{\mathcal{I}}_3(t,a_2,a_3)\right):[0,\infty)\times \mathbb{T}^2\rightarrow \mathbb{T}^2$ of \eqref{eq:reduced iw} defined by $A^{\mathcal{I}}(0,a_2,a_3)=(a_2,a_3)$ and 
	\begin{equation}\label{def: iA}
		\left\{
		\begin{aligned}
			&\frac{d}{dt}A^{\mathcal{I}}_2(t,a_2,a_3)=\iu_2(t,A^{\mathcal{I}}(t,a_2,a_3))-MA^{\mathcal{I}}_2(t,a_2,a_3),\\
			&\frac{d}{dt}A^{\mathcal{I}}_3(t,a_2,a_3)=\iu_3(t,A^{\mathcal{I}}(t,a_2,a_3)).
		\end{aligned}
		\right.
	\end{equation}
	Evaluating along this curve, we have from \eqref{eq:reduced iw}
	\begin{equation}\label{value: iw}
		\iw_1(t,\iA_2(t,a),\iA_3(t,a))=e^{Mt}\iw_{1,0}(a_2,a_3). 
	\end{equation}
	We need the following two lemmas for $A^{\mathcal{I}}$.
	\begin{lemma}\label{determinant of iA}
		The determinant of $\nb_a\iA$ with $a=(a_2,a_3)$ satisfies
		\begin{equation}\label{eq:detA}
			\det (\nb_a\iA) = e^{-Mt}.
		\end{equation}
	\end{lemma}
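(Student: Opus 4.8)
The plan is to compute the evolution of the Jacobian determinant $\det(\nb_a\iA(t,a))$ in time by differentiating the ODE system \eqref{def: iA}. Write $\iA=(\iA_2,\iA_3)$ and let $J(t,a):=\nb_a\iA(t,a)$, a $2\times2$ matrix with $J(0,a)=\mathrm{Id}$. Differentiating \eqref{def: iA} in $a$ and using the chain rule, one gets a linear matrix ODE of the form
\begin{equation*}
	\frac{d}{dt}J(t,a) = \bigl(\nb_x V(t,\iA(t,a)) - M E\bigr) J(t,a),
\end{equation*}
where $V=(\iu_2,\iu_3)$ is the (two-dimensional) velocity field appearing in \eqref{eq:reduced iw} and $E=\mathrm{diag}(1,0)$ comes from the $-M\iA_2$ term. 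By Liouville's formula (Abel--Jacobi--Liouville identity), $\frac{d}{dt}\det J = \mathrm{tr}\bigl(\nb_x V(t,\iA)-ME\bigr)\,\det J$.

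Next I would observe that $V=(\iu_2,\iu_3)$ is divergence-free in the $(x_2,x_3)$-variables: this follows from \eqref{condi: iu}, since $\iu_1\equiv0$ and $\rd_1\iu_2\equiv\rd_1\iu_3\equiv0$, so the full incompressibility $\nb\cdot\iu=0$ reduces to $\rd_2\iu_2+\rd_3\iu_3=0$. Hence $\mathrm{tr}(\nb_x V)=0$, while $\mathrm{tr}(ME)=M$. Therefore $\frac{d}{dt}\det J = -M\det J$, and with the initial condition $\det J(0,a)=1$ we conclude $\det(\nb_a\iA)=e^{-Mt}$, which is \eqref{eq:detA}.

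This argument is essentially routine; there is no real obstacle, since all the structural facts needed — the reduced transport equation \eqref{eq:reduced iw}, the vanishing components \eqref{condi: iw}--\eqref{condi: iu}, and the global smoothness of $\iw_1$ (hence of the flow $\iA$) established just above — are already in hand. The only point requiring a line of care is justifying that $\iu_2,\iu_3$ depend only on $(x_2,x_3)$ so that the two-dimensional divergence vanishes identically, but this is immediate from the Biot--Savart formula \eqref{formula: iu}. Alternatively, one can avoid Liouville's formula entirely and argue directly: \eqref{value: iw} gives $\iw_1(t,\iA(t,a))=e^{Mt}\iw_{1,0}(a)$, and since $\iw_1$ is transported-and-stretched with stretching rate exactly $M$ while its spatial integral (equivalently, any $L^p$ norm growth, cf.\ the lemma giving \eqref{est: iw}) is controlled, a change-of-variables $\int \iw_1(t,x)\,dx = \int \iw_1(t,\iA(t,a))\det(\nb_a\iA)\,da$ forces $\det(\nb_a\iA)=e^{-Mt}$; I would nonetheless present the Liouville computation as the cleanest route.
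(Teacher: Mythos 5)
Your proof is correct and follows essentially the same route as the paper: the paper also differentiates $\det(\nabla_a\iA)$ in time via the Liouville/Jacobi identity, uses $\iu_1=0$ together with $\nabla\cdot\iu=0$ to see that $\partial_2\iu_2+\partial_3\iu_3=0$, and so the trace of the velocity gradient of the drift $(\iu_2-Mx_2,\iu_3)$ reduces to $-M$, giving $\frac{d}{dt}\det(\nabla_a\iA)=-M\det(\nabla_a\iA)$. Your write-up simply makes the Liouville step explicit where the paper states it in one line.
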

	\begin{proof}
		Using $\iu_1=0$ and $\nb \cdot \iu =0$, we compute 
		\begin{equation*}
			\begin{aligned}
				\frac{d}{dt}\det (\nb_a\iA)(t,a)&=(\rd_2 \iu_2(t,A^{\mathcal{I}}(t,a)) -M + \rd_3\iu_3(t,A^{\mathcal{I}}(t,a)))\det (\nb_a\iA)(t,a) = -M\det (\nb_a\iA)(t,a).
			\end{aligned}
		\end{equation*}
		Since $\det (\nb_a\iA)(0,a)=1$, we obtain \eqref{eq:detA}.
	\end{proof}
	
	\begin{lemma}\label{lemma: iA}
		Let $1\le a_2,a_3\le 2$. Then for $0\le t\le T_M$, there exist constants $C_1,\,C_2 >0$, $C_3\ge1$ independent of $M$ such that
		\begin{equation}\label{est:A_2}
			C_1 e^{-C_3 Mt}\le \iA_2(t,a_2,a_3) \le C_2 e^{-C_3^{-1} Mt},
		\end{equation}
		and
		\begin{equation}\label{est:A_3}
			C_1 \le \iA_3(t,a_2,a_3) \le C_2.
		\end{equation}
	\end{lemma}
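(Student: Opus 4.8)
The plan is to reduce both \eqref{est:A_2} and \eqref{est:A_3} to one–dimensional Grönwall arguments for the logarithmic distances to the coordinate axes, $w(t):=\log\bigl(3L/\iA_2(t)\bigr)$ and $b(t):=\log\bigl(3L/\iA_3(t)\bigr)$, using two ingredients. First, the odd symmetry of $\iw_1$ in $x_2$ and in $x_3$ (which propagates, as already observed, and which by the Biot--Savart formula \eqref{formula: iu} makes $\iu_2$ odd in $x_2$ and $\iu_3$ odd in $x_3$), so that $\iu_2(t,0,\cdot)\equiv 0$ and $\iu_3(t,\cdot,0)\equiv 0$; in particular the hyperplanes $\{x_2=0\}$ and $\{x_3=0\}$ are invariant for \eqref{def: iA}, so $1\le a_2,a_3\le 2$ already forces $\iA_2(t),\iA_3(t)>0$ for all $t\ge0$. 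Second, the log-Lipschitz bound \eqref{est: iu}. The one structural fact that forces the constants to be $M$-uniform is
\[
\int_0^{T_M} e^{Ms}\,ds=\frac{e^{MT_M}-1}{M}=\frac{(M+1)-1}{M}=1,
\]
since $MT_M=\log(M+1)$: the a priori enormous prefactor $e^{Ms}\le M+1$ in \eqref{est: iu} integrates to a pure constant over the whole lifespan $[0,T_M]$. A routine continuity argument (run inside the lifespan where the vorticity stays in $|x|\le 10$ and $u^\calL$ has the form \eqref{eq:u-L}) then lets us treat $\iA_2(t),\iA_3(t)$ as the obvious positive reals, the a priori confinement being closed by the estimates below.

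For \eqref{est:A_3}: write $\iu_3(t,\iA)=\iu_3(t,\iA_2,\iA_3)-\iu_3(t,\iA_2,0)$ and apply \eqref{est: iu} to the pair $(\iA_2,\iA_3),(\iA_2,0)$, whose separation equals $\iA_3$; this gives $|\iu_3(t,\iA)|\lesssim e^{Mt}\iA_3\,(1+b(t))$, hence from \eqref{def: iA} one has $\bigl|\tfrac{d}{dt}\log(1+b)\bigr|\le C e^{Mt}$. Integrating and using $\int_0^{T_M}e^{Ms}ds=1$ yields $(1+b(0))e^{-C}\le 1+b(t)\le(1+b(0))e^{C}$ on $[0,T_M]$; since $1+b(0)=1+\log(3L/a_3)$ ranges over a fixed interval as $a_3\in[1,2]$, exponentiating $\iA_3(t)=3L\,e^{-b(t)}$ gives \eqref{est:A_3}.

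For \eqref{est:A_2}: from \eqref{def: iA}, $\tfrac{d}{dt}\log\iA_2=\iu_2(t,\iA)/\iA_2-M$; using $\iu_2(t,\iA)=\iu_2(t,\iA_2,\iA_3)-\iu_2(t,0,\iA_3)$ and \eqref{est: iu} for the pair $(\iA_2,\iA_3),(0,\iA_3)$, of separation $\iA_2$, gives $|\iu_2(t,\iA)/\iA_2|\le C e^{Mt}(1+w(t))$, so with $\tilde w:=1+w$,
\[
\bigl|\tilde w'(t)-M\bigr|\le C e^{Mt}\,\tilde w(t),\qquad t\in[0,T_M].
\]
Multiplying by the integrating factors $\exp\!\bigl(\mp C\!\int_0^t e^{Ms}ds\bigr)$, integrating, and using $0\le\int_0^t e^{Ms}ds\le 1$ on $[0,T_M]$, I obtain $e^{-C}(\tilde w(0)+Mt)\le\tilde w(t)\le e^{C}(\tilde w(0)+Mt)$. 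Since $\iA_2(t)=3L\,e^{1-\tilde w(t)}$ and $\tilde w(0)=1+\log(3L/a_2)$ ranges over a fixed interval, this yields \eqref{est:A_2} with $C_3=e^{C}$ (which is $\ge1$, as $C\ge1$ may be assumed) and explicit constants $C_1,C_2$ depending only on $L$ and $C$; enlarging $C_1^{-1},C_2$ if needed, they also serve \eqref{est:A_3}.

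The step I expect to be the real obstacle is precisely this $M$-uniformity of $C_1,C_2,C_3$. The ODE for $\tilde w$ is genuinely non-monotone: on $[0,T_M]$ the factor $e^{Mt}$ in \eqref{est: iu} can reach $M+1$, and a careless bound would let the self-interaction term $\iu_2(t,\iA)/\iA_2$ overwhelm the contractive drift $-M$ and inject powers or logarithms of $M$ into the constants. The argument survives only because $\int_0^{T_M}e^{Ms}\,ds=1$, which turns the total strain produced by the exponentially growing intermediate velocity gradient over the lifespan into a universal constant, so the integrating-factor estimate loses merely the fixed factor $e^{\pm C}$. The remaining point — the continuity/bootstrap keeping $\iA_2,\iA_3$ in the $|x|\le10$ region where \eqref{eq:u-L} applies and the coordinates are unambiguous — is routine, since the bounds just derived already confine $\iA_2\le C_2=O(1)$ and $\iA_3=O(1)$.
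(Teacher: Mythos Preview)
Your proof is correct and follows essentially the same approach as the paper: both use the odd symmetry of $\iw_1$ to write $\iu_2(t,\iA)=\iu_2(t,\iA)-\iu_2(t,0,\iA_3)$ and $\iu_3(t,\iA)=\iu_3(t,\iA)-\iu_3(t,\iA_2,0)$, apply the log-Lipschitz bound \eqref{est: iu}, and reduce to an integrating-factor argument for the logarithmic variables, the $M$-uniformity hinging on $\int_0^{T_M}e^{Ms}\,ds=1$. Your write-up makes this last point more explicit, but the argument and constants (in particular $C_3=e^{C}$) are the same.
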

	\begin{proof}
		Note that $\iu_2(t,0,\iA_3(t,a_2,a_3))=0$ for all $t$ by odd symmetry of $\iw_1$, which gives
		\begin{equation}\label{equality: iA_2}
			\frac{d}{dt}A^{\mathcal{I}}_2(t,a_2,a_3)=\iu_2(t,A^{\mathcal{I}}(t,a_2,a_3))-\iu_2(t,0,A_3^{\mathcal{I}}(t,a_2,a_3))-MA^{\mathcal{I}}_2(t,a_2,a_3).
		\end{equation}
		To begin with, we show the upper bound of $\iA_2(t,a_2,a_3)$ in \eqref{est:A_2}.  
		Applying \eqref{est: iu} and noticing $A^{\mathcal{I}}_2(t,a_2,a_3)>0$, we have
		\begin{equation*}
			\frac{d}{dt}A^{\mathcal{I}}_2(t,a_2,a_3)
			\le C e^{tM}
			A_2^{\mathcal{I}}(t,a_2,a_3)\left(1+\log \frac{3L}{A_2^{\mathcal{I}}(t,a_2,a_3)}\right)-MA^{\mathcal{I}}_2(t,a_2,a_3),
		\end{equation*}
		which leads to
		\begin{equation*}
			\frac{d}{dt}\left(\log A^{\mathcal{I}}_2(t,a_2,a_3)-(1+\log 3L)\right)\le -C e^{tM}\left(\log A^{\mathcal{I}}_2(t,a_2,a_3)-(1+\log 3L)\right)-M.
		\end{equation*}
		From
		\begin{equation*}
			\frac{d}{dt}\left(\left(\log A^{\mathcal{I}}_2(t,a_2,a_3)-(1+\log 3L)\right)\exp{\left( \int_{0}^{t} Ce^{sM} ds \right)} \right) \le -M\exp{\left( \int_{0}^{t} Ce^{sM} ds \right)} \le -M,
		\end{equation*}
		we see that for $0\le t \le T_M$,
   {
		\begin{equation}\label{est: A_2 sample}
			\begin{split}
				\log A^{\mathcal{I}}_2(t,a_2,a_3) 
				&\le 1+\log 3L + \exp\left(-\int_{0}^{t}Ce^{sM}ds\right) \left(\log a_2 -(1+\log 3L) -Mt\right) \\
				%&= 1+\log 3L+ \exp \left(\frac{C(1-e^{Mt})}{M}\right)\left(\log a_2 -(1+\log 3L)-Mt \right) \\
				&\le 1+\log 3L+ \exp(-C)\left(\log a_2 -(1+\log3L)-Mt \right),
			\end{split}
		\end{equation}
  } where we used $\log a_2 -(1+\log 3L)-Mt<0$ and $t\le T_M=\frac{\log(M+1)}{M}$.
		Since $a_2\le 2$, we arrive at
		\begin{equation*}
			A^{\mathcal{I}}_2(t,a_2,a_3) \lesssim \exp \left(\exp(-C)(\log 2 -(1+\log 3L) -Mt)\right)
		\end{equation*}
		for $t\in[0,T_M]$. We take $C_3:=\exp(C)$, which satisfies $C_3\ge1$.
		
		Now we prove the lower bound of $\iA_2(t,a_2,a_3)$ in \eqref{est:A_2}. Recalling \eqref{equality: iA_2} and \eqref{est: iu}, we have
		\begin{equation*}
			\begin{split}
				-\frac{d}{dt}A^{\mathcal{I}}_2(t,a_2,a_3)
				&=-\iu_2(t,A^{\mathcal{I}}(t,a_2,a_3))+\iu_2(t,0,A_3^{\mathcal{I}}(t,a_2,a_3))+MA^{\mathcal{I}}_2(t,a_2,a_3) \\
				&\le C e^{tM}
				A_2^{\mathcal{I}}(t,a_2,a_3)\left(1+\log \frac{3L}{A_2^{\mathcal{I}}(t,a_2,a_3)}\right)+MA^{\mathcal{I}}_2(t,a_2,a_3),
			\end{split}
		\end{equation*}
		which yields
		\begin{equation*}
			\frac{d}{dt}\left(1+\log \frac{3L}{A_2^{\mathcal{I}}(t,a_2,a_3)}\right)
			\le C e^{tM}
			\left(1+\log \frac{3L}{A_2^{\mathcal{I}}(t,a_2,a_3)}\right)+M.
		\end{equation*}
		Noticing
		\begin{equation*}
			\frac{d}{dt}\left(\left(1+\log \frac{3L}{A_2^{\mathcal{I}}(t,a_2,a_3)}\right)\exp{\left(- \int_{0}^{t} C e^{sM}ds \right)} \right) \le M\exp{\left( -\int_{0}^{t} C e^{sM} ds \right)} \le M,
		\end{equation*}
		we see that for $0\le t \le T_M$,
   {
		\begin{equation}\label{est: A_2 opposite sample}
			\begin{split}
				\log \frac{3L}{A_2^{\mathcal{I}}(t,a_2,a_3)}&\le -1+ \exp\left(\int_{0}^{t} C e^{sM}ds\right)\left(1+\log \frac{3L}{a_2} + Mt \right) %\\        &= -1+ \exp \left(\frac{C(e^{Mt}-1)}{M}\right)\left(1+\log \frac{3L}{a_2}+Mt \right) \\
				\le -1+ C_3\left(1+\log \frac{3L}{a_2}+Mt \right)
			\end{split}
		\end{equation} 
  }where $C_3:=\exp(C)$.
		Since $a_2\ge1$, we have $A^{\mathcal{I}}_2(t,a_2,a_3) \ge 3L\exp \left(1-C_3(1+\log 3L +Mt) \right).$ 
		To obtain the bounds of $\iA_3(t,a_2,a_3)$ in \eqref{est:A_3}, we note that $\iu_3(t,\iA_2(t,a_2,a_3),0)=0$ (by the odd symmetry of $\iw_1$) and $\iA_3(t,a_2,a_3)>0$ for all $t>0$, and proceed as we have shown the case of $\iA_2(t,a_2,a_3)$.
	\end{proof}
	
	Henceforth, let $C_i$ $(i=1,2,3)$ denote the constants in Lemma \ref{lemma: iA}. 
	Now we are ready to estimate $\normif{\nb \iu(t,\cdot)}$ near the origin.
	\begin{lemma}\label{lemma: nb iu}
		Let $\dlt:=\frac{C_1}{10(M+1)^{C_3}}$.  Then for $t\in [0,T_M]$,
		we have
		\begin{equation}\label{est: nabla iu}
			\nrmb{\nabla \iu(t,\cdot)}_{L^{\infty}(B_0(\dlt))} \lesssim e^{-C_3^{-1}Mt}.
		\end{equation}
	\end{lemma}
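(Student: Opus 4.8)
The plan is to use two properties of $\iw(t,\cdot)=(\iw_1(t,\cdot),0,0)$ that persist throughout $[0,T_M]$: its support stays in a fixed $O(1)$ region bounded away from the $x_1$-axis, and it is odd in both $x_2$ and $x_3$. Since $\iu=(0,\iu_2,\iu_3)$ does not depend on $x_1$, $\nb\iu$ is the $(x_2,x_3)$-gradient of the planar Biot--Savart velocity $(\iu_2,\iu_3)(t,x)=\int_{\bbT^2}K(x-y)\iw_1(t,y)\,dy$ on $\bbT^2=[-L,L)^2$, with $K=\nb^\perp G$ and $G$ the mean zero periodic Green's function of $-\lap$; I will use the standard fact that $K$ and all its derivatives are bounded on $\{z:\dist_{\bbT^2}(z,0)\ge r\}$ by a constant depending only on $r$ and $L$. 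First I would locate $\supp\iw_1(t,\cdot)$: by construction $\iw_{1,0}$ is supported in $\{1\le|x_2|\le2,\ 1\le|x_3|\le2\}$ and agrees with $\varphi$ on the open first quadrant, and by \eqref{value: iw} its support is transported by the characteristic map, $\supp\iw_1(t,\cdot)=A^\calI(t,\supp\iw_{1,0})$. Since $\iw_1$ keeps its odd symmetries, $A^\calI(t,\cdot)$ commutes with the two coordinate reflections and leaves the open quadrants invariant, so $\supp\iw_1(t,\cdot)$ is the union of the four reflected copies of $A^\calI(t,[1,2]^2)$; combining this with Lemma~\ref{lemma: iA} gives, for $t\in[0,T_M]$,
\begin{equation*}
\supp\iw_1(t,\cdot)\ \sbeq\ \bigl\{(y_2,y_3):\ C_1e^{-C_3Mt}\le|y_2|\le C_2e^{-C_3^{-1}Mt},\ \ C_1\le|y_3|\le C_2\bigr\}.
\end{equation*}
In particular every point of $B_0(\dlt)$ is at distance $\ge C_1-\dlt\ge C_1/2$ (on $\bbT^2$) from this support, using $\dlt\le C_1/2$.

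Next I observe that the crude bound $\nrmb{\nb\iu(t,\cdot)}_{L^\infty(B_0(\dlt))}\lesssim\nrmb{\iw_1(t,\cdot)}_{L^1}\,\sup_{\dist_{\bbT^2}(z,0)\ge C_1/2}|\nb K(z)|\lesssim\nrmb{\iw_{1,0}}_{L^1}$ is already in hand --- using the conservation $\nrmb{\iw_1(t,\cdot)}_{L^1}=\nrmb{\iw_{1,0}}_{L^1}$ (the $p=1$ case of \eqref{est: iw}) --- but it yields only boundedness, not the claimed decay. The decaying factor $e^{-C_3^{-1}Mt}$ comes from the oddness in $x_2$. Setting $\sgm(y_2,y_3):=(-y_2,y_3)$, I would split the Biot--Savart integral according to the sign of $y_2$, change variables $y\mapsto\sgm y$ on $\{y_2<0\}$, and use $\iw_1(t,\sgm y)=-\iw_1(t,y)$ to obtain, for $x\in B_0(\dlt)$,
\begin{equation*}
\nb\iu(t,x)=\int_{\{y_2>0\}}\bigl[\nb K(x-y)-\nb K(x-\sgm y)\bigr]\,\iw_1(t,y)\,dy .
\end{equation*}
For $y\in\supp\iw_1(t,\cdot)$ with $y_2>0$, the segment joining $x-y$ to $x-\sgm y$ moves only in its second coordinate, hence retains third coordinate $x_3-y_3$, and so stays inside $\{\dist_{\bbT^2}(z,0)\ge C_1/2\}$ (here $\dlt\le C_1/2$, and since the whole configuration lives well inside the fundamental domain the lattice images are irrelevant). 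The mean value theorem then gives $\bigl|\nb K(x-y)-\nb K(x-\sgm y)\bigr|\le2|y_2|\,\sup_{\dist_{\bbT^2}(z,0)\ge C_1/2}|\nb^2K|\lesssim|y_2|\le C_2e^{-C_3^{-1}Mt}$, whence
\begin{equation*}
\nrmb{\nb\iu(t,x)}\ \lesssim\ e^{-C_3^{-1}Mt}\int_{\bbT^2}|\iw_1(t,y)|\,dy\ =\ \nrmb{\iw_{1,0}}_{L^1}\,e^{-C_3^{-1}Mt}
\end{equation*}
uniformly for $x\in B_0(\dlt)$, which is \eqref{est: nabla iu}.

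The step I expect to be the real crux is recognizing that the odd symmetry across $\{x_2=0\}$ is exactly the mechanism producing the decay: as $t$ grows the support of $\iw_1$ collapses toward the $x_3$-axis at rate $e^{-C_3^{-1}Mt}$, so the halves $\{y_2>0\}$ and $\{y_2<0\}$ form an increasingly perfectly cancelling pair, upgrading the $O(1)$ bound to $O(e^{-C_3^{-1}Mt})$; without this the naive estimate is stuck at $O(1)$. Everything else --- the boundedness of $\nb^2K$ away from the singularity on $\bbT^2$ and the localization of $\supp\iw_1(t,\cdot)$ --- follows immediately from Lemma~\ref{lemma: iA} and elementary properties of the periodic Biot--Savart kernel, and should not cause difficulty.
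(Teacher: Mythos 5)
Your proof is correct, and it takes a genuinely different (and cleaner) route than the paper while exploiting the same underlying mechanism. The paper works directly with the explicit lattice-sum formulas \eqref{formula: partial2 iu2}--\eqref{formula: partial3 iu2}, symmetrizes over the four quadrants, changes variables $y=A^\calI(t,a)$ to pull the integral back to the initial support using $\det(\nb_a A^\calI)=e^{-Mt}$ and \eqref{value: iw}, and then extracts the decay by a component-by-component analysis: for $\rd_2\iu_2=-\rd_3\iu_3$ each quadrant term already decays because the kernel carries an explicit factor $(x_2-A_2^\calI)$, while for $\rd_3\iu_2=\rd_2\iu_3$ no single term decays and the paper must combine the $\pm y_2$ pair and check (by writing the numerator of the combined kernel $K_{x_2,x_3}$ as a polynomial) that every monomial carries at least one power of $z_2=A_2^\calI$. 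Your argument replaces this polynomial bookkeeping by a single mean-value-theorem estimate $|\nb K(x-y)-\nb K(x-\sgm y)|\lesssim |y_2|\sup|\nb^2 K|$ after symmetrizing in $y_2$, which treats all four nonzero entries of $\nb\iu$ at once; you also trade the paper's change-of-variables/Jacobian argument for the equivalent and more direct $L^1$-conservation $\nrm{\iw_1(t,\cdot)}_{L^1}=\nrm{\iw_{1,0}}_{L^1}$. The one point you gloss over slightly — that the segment from $x-y$ to $x-\sgm y$ stays uniformly away from all lattice translates of the singularity, so that $\sup|\nb^2 K|$ along it is a fixed constant — is handled correctly by your observation that the segment has fixed third coordinate $x_3-y_3$ with $|x_3-y_3|\ge C_1-\dlt$, but deserves the explicit remark that $C_2+\dlt$ must be well below $L$ (an assumption the paper also makes implicitly when it treats the $n=0$ term and the $n\neq 0$ tail separately). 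Subject to that standard caveat on the size of $L$, the argument is complete and gives exactly \eqref{est: nabla iu}.
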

	\begin{remark}\label{rmk: dlt}
		We note that $\dlt \le \frac{C_1 e ^{-C_3Mt}}{10}$ for $0\le t \le T_M$ by the definitions of $\dlt$ and $T_M$.
	\end{remark}
	\begin{proof}
		Let $x=(x_2,x_3)\in B_0(\dlt)$. 
		We recall explicit formulas
		\begin{equation}\label{formula: partial2 iu2}
			\begin{split}
				&\rd_2 \iu_2 (t,x_2,x_3)= -\rd_3 \iu_3(t,x_2,x_3)\\
				&\qquad=\frac{1}{\pi}\sum_{n=(n_2,n_3)\in \mathbb{Z}^2}p.v.\iint_{[-L,L)^2} \frac{(x_2-y_2-2Ln_2)(x_3-y_3-2Ln_3)}{|x-y-2Ln|^4}\iw_1(t,y_2,y_3)dy_2dy_3 \\
			\end{split}
		\end{equation}
		Moreover, since $\iw_1=0$ in $[0,T_M]\times B_0(\dlt)$ by Lemma \ref{lemma: iA},
		\begin{equation}\label{formula: partial3 iu2}
			\begin{split}
				&\rd_2 \iu_3 (t,x_2,x_3)
				= -\rd_3 \iu_2(t,x_2,x_3) \\
				&\qquad=\frac{1}{2\pi}\sum_{n=(n_2,n_3)\in \mathbb{Z}^2}p.v.\iint_{[-L,L)^2} \frac{(x_2-y_2-2Ln_2)^2-(x_3-y_3-2Ln_3)^2}{|x-y-2Ln|^4}\iw_1(t,y_2,y_3)dy_2dy_3 
			\end{split}
		\end{equation}
		in $[0,T_M]\times B_0(\dlt)$. (See e.g. \cite{MB} for derivations of \eqref{formula: partial2 iu2} and \eqref{formula: partial3 iu2}.) 
		To begin with, we estimate $\nrmb{\rd_2\iu_2}_{L^{\infty}(B_0(\dlt))} =\nrmb{\rd_3\iu_3}_{L^{\infty}(B_0(\dlt))} $.
		
		By the odd symmetry of $\iw_1$ in both $x_2$ and $x_3$, \eqref{formula: partial2 iu2} yields
		 {
			\begin{equation*}
				\begin{split}
					&\rd_2 \iu_2 (t,x_2,x_3) \\
					&\approx \sum_{n \in \mathbb{Z}^2}\iint_{\left\{0\le y_2,y_3\le L\right\}} 
					\left[\frac{(x_2-y_2-2Ln_2)(x_3-y_3-2Ln_3)}{|x-y-2Ln|^4}
					-\frac{(x_2+y_2-2Ln_2)(x_3-y_3-2Ln_3)}{\left((x_2+y_2-2Ln_2)^2 + (x_3-y_3-2Ln_3)^2 \right)^2}
					\right. \\
					&\left.\quad 
					-\frac{(x_2-y_2-2Ln_2)(x_3+y_3-2Ln_3)}{\left((x_2-y_2-2Ln_2)^2 + (x_3+y_3-2Ln_3)^2 \right)^2}+\frac{(x_2+y_2-2Ln_2)(x_3+y_3-2Ln_3)}{\left((x_2+y_2-2Ln_2)^2 + (x_3+y_3-2Ln_3)^2 \right)^2}
					\right]\iw_1(t,y_2,y_3)dy_2y_3 \\
					&= \mrI + \mrII + \mrIII + \mrIV.
				\end{split}
			\end{equation*}
		}
		 {To estimate $\mrI$, 
			we divide it into $\mrI_0$ and $\mrI_{\neq 0}$ which correspond to the term with $n=(0,0)$ and the sum of all other terms with $n\neq (0,0)$, respectively.
			For $\mrI_0$,}
		we make a change of variables $(y_2,y_3)\mapsto (a_2,a_3)$ by $y=\iA(t,a)$ and use \eqref{eq:detA} to have
		\begin{equation*}
			 {\mrI_0} \approx \iint_{\left\{0\le a_2,a_3\le L\right\}}
			\frac{(x_2-\iA_2(t,a))(x_3-\iA_3(t,a))}{|x-\iA(t,a)|^4}\iw_1(t,\iA_2(t,a),\iA_3(t,a))e^{-Mt}da_2da_3.
		\end{equation*}
		By \eqref{value: iw} and the assumption that $\iw_{1,0}$ is supported on $\left\{1\le a_2,a_3\le 2 \right\}$ (see \eqref{def: iw_0}), we obtain
		\begin{equation*}
			 {\mrI_0} \le \normif{\iw_{1,0}} \iint_{\left\{1\le a_2,a_3\le 2\right\}}
			\frac{\left|(x_2-\iA_2(t,a))(x_3-\iA_3(t,a))\right|}{|x-\iA(t,a)|^4}da_2da_3.
		\end{equation*}
		Since $x\in B_0(\dlt)$, Lemma \ref{lemma: iA} and Remark \ref{rmk: dlt} imply
		\begin{equation}\label{est: x_2,x_3-iA_2,3}
			\left\{ \begin{aligned}
				\frac{9C_1e^{-C_3Mt}}{10} \le \iA_2(t,a)-|x_2| &\le \iA_2(t,a)-x_2 \le \iA_2(t,a)+ |x_2| \le \frac{11C_2e^{-C_3^{-1}Mt}}{10}, \\
				\frac{9C_1}{10}\le \iA_3(t,a)-|x_3| &\le\iA_3(t,a)-x_3 \le \iA_3(t,a)+|x_3| \le \frac{11C_2}{10}.
			\end{aligned}
			\right.  
		\end{equation}
		Consequently,
		\begin{equation*}
			\frac{\left|(x_2-\iA_2(t,a))(x_3-\iA_3(t,a))\right|}{|x-\iA(t,a)|^4} \le \frac{\left|(x_2-\iA_2(t,a))(x_3-\iA_3(t,a))\right|}{(x_3-\iA_3(t,a))^4}
			\lesssim e^{-C_3^{-1}Mt},
		\end{equation*}
		which gives $ {\mrI_0} \lesssim e^{-C_3^{-1}Mt}$.
		To estimate  {$\mrI_{\neq 0}$}, denoting $\tilde{n}:=(-n_2,n_3)$, we compute
		 {
			\begin{equation*}
				\begin{split}
					&\frac{(x_2-y_2-2Ln_2)(x_3-y_3-2Ln_3)}{|x-y-2Ln|^4} +\frac{(x_2-y_2+2Ln_2)(x_3-y_3-2Ln_3)}{|x-y-2L\tilde{n}|^4} \\
					&=\frac{(x_3-y_3-2Ln_3)\left((x_2-y_2)(|x-y-2L\tilde{n}|^4+|x-y-2Ln|^4)-2L_2(|x-y-2L\tilde{n}|^4-|x-y-2Ln|^4)\right)}{|x-y-2Ln|^4|x-y-2L\tilde{n}|^4}.
				\end{split}
			\end{equation*}
		}
		Since $|x-y-2L\tilde{n}|^4-|x-y-2Ln|^4=16Ln_2(x_2-y_2)\left((x_2-y_2)^2+4L^2n_2^2) + (x_3-y_3-2Ln_3)^2\right) $
		and $|x-y|\le |x|+|y|\le \dlt + L$ implies 
		\begin{equation}\label{lowerbound: x-y-2Ln}
			|x-y-2Ln|,\,|x-y-2L\Tilde{n}|\gtrsim |n|,
		\end{equation}
		we have
		\begin{equation*}
			\left|\frac{(x_2-y_2-2Ln_2)(x_3-y_3-2Ln_3)}{|x-y-2Ln|^4} +\frac{(x_2-y_2+2Ln_2)(x_3-y_3-2Ln_3)}{|x-y-2L\tilde{n}|^4}\right| \lesssim\frac{ {|x_2-y_2|}}{|n|^3}.
		\end{equation*}
		Hence, making again the change of variables $(y_2,y_3)\mapsto (a_2,a_3)$ by $y=\iA(t,a)$ and using \eqref{est: x_2,x_3-iA_2,3}, we proceed in the same argument to obtain
		$ {\mrI_{\neq 0}} \lesssim e^{-C_3^{-1}Mt}.$
		 {In a similar manner, we can also show that all of $\mrII,\, \mrIII,$ and $\mrIV$ have the same upper bound, which implies $\nrmb{\rd_2\iu_2}_{L^{\infty}(B_0(\dlt))} \lesssim e^{-C_3^{-1}Mt}.$}
		
		Next, we estimate $\nrmb{\rd_3\iu_2}_{L^{\infty}(B_0(\dlt))}\left(=\nrmb{\rd_2\iu_3}_{L^{\infty}(B_0(\dlt))}\right)$.
		By the odd symmetry of $\iw_1$ in both $x_2$ and $x_3$, \eqref{formula: partial3 iu2} yields
		 {
			\begin{equation*}
				\begin{split}
					&\rd_3 \iu_2 (t,x_2,x_3)\\
					&\approx\sum_{n \in \mathbb{Z}^2}\iint_{\left\{0\le y_2,y_3\le L\right\}} 
					\left[\frac{(x_2-y_2-2Ln_2)^2-(x_3-y_3-2Ln_3)^2}{|x-y-2Ln|^4}
					-\frac{(x_2+y_2-2Ln_2)^2-(x_3-y_3-2Ln_3)^2}{\left((x_2+y_2-2Ln_2)^2 + (x_3-y_3-2Ln_3)^2 \right)^2} \right. \\
					&\left.\quad 
					-\frac{(x_2-y_2-2Ln_2)^2-(x_3+y_3-2Ln_3)^2}{\left((x_2-y_2-2Ln_2)^2 + (x_3+y_3-2Ln_3)^2 \right)^2}+\frac{(x_2+y_2-2Ln_2)^2-(x_3+y_3-2Ln_3)^2}{\left((x_2+y_2-2Ln_2)^2 + (x_3+y_3-2Ln_3)^2 \right)^2}
					\right]\iw_1(t,y_2,y_3)dy_2y_3 \\
					&= \mrI + \mrII + \mrIII + \mrIV.
				\end{split}
			\end{equation*}
		}
		for $(t,x)\in [0,T_M] \times B_0(\dlt)$.
		To bound $\mrI +\mrII$, 
		 { 
			we again divide it into $(\mrI+\mrII)_0$ and $(\mrI+\mrII)_{\neq 0}$ which correspond to the term with $n=(0,0)$ and the sum of all other terms with $n\neq (0,0)$, respectively.
			For $(\mrI+\mrII)_0$,}
		we again make a change of variables $(y_2,y_3)\mapsto (a_2,a_3)$ by $y=\iA(t,a)$ and use \eqref{eq:detA} to have
		\begin{equation*}
			\begin{split}
				 {(\mrI +\mrII)_0}= \frac{1}{2\pi}
				\iint_{\left\{0\le a_2,a_3\le L\right\}}
				K_{x_2,x_3}(\iA_2(t,a),\iA_3(t,a))\,\iw_1(t,\iA_2(t,a),\iA_3(t,a))e^{-Mt}da_2da_3,
			\end{split}
		\end{equation*}
		where
		\begin{equation*}
			K_{x_2,x_3}(z_2,z_3) = \frac{\left((x_2-z_2)^2-(x_3-z_3)^2\right)\left((x_2+z_2)^2 + (x_3-z_3)^2 \right)^2-\left( (x_2+z_2)^2-(x_3-z_3)^2\right)|x-z|^4}{|x-z|^4\left((x_2+z_2)^2 + (x_3-z_3)^2 \right)^2}
		\end{equation*}
		for $z=(z_2,z_3)$.
		From \eqref{value: iw} and the assumption that $\iw_{1,0}$ is supported on $\left\{1\le a_2,a_3\le 2 \right\}$, we have
		\begin{equation*}
			 {(\mrI +\mrII)_0} \le \normif{\iw_{1,0}} \iint_{\left\{1\le a_2,a_3\le 2\right\}}
			\left| K_{x_2,x_3}(\iA_2(t,a),\iA_3(t,a))  \right| da_2da_3.
		\end{equation*}
		Note that the numerator of $K_{x_2,x_3}(z_2,z_3)$ can be written as
		\begin{equation}\label{est: kernel K}
			\sum_{\substack{\alp_1,\alp_2,\alp_4\ge0, \alp_3\ge1 \\ \alp_1+\alp_2+\alp_3+\alp_4=6, \alp_3\ge1}} C_{(\alp_1,\alp_2,\alp_3,\alp_4)}
			x_2^{\alp_1}x_3^{\alp_2}z_2^{\alp_3}z_3^{\alp_4}
		\end{equation}
		for some constants $C_{(\alp_1,\alp_2,\alp_3,\alp_4)}$'s.
		Moreover, the denominator of $K_{x_2,x_3}(z_2,z_3)$ is bounded below by $(x_3-z_3)^8$.
		Thus, using Lemma \ref{lemma: iA} and Remark \ref{rmk: dlt}, we have
		\begin{equation*}
			\begin{split}
				\left|K_{x_2,x_3}(\iA_2(t,a),\iA_3(t,a))\right|
				&\lesssim 	\sum_{\substack{\alp_1,\alp_2,\alp_4\ge0, \alp_3\ge1 \\ \alp_1+\alp_2+\alp_3+\alp_4=6, \alp_3\ge1}} \frac{\left|x_2^{\alp_1}x_3^{\alp_2}(\iA_2)^{\alp_3}(\iA_3)^{\alp_4}\right|}{(x_3-\iA_3(t,a))^8} \\
				&\lesssim 	\sum_{\substack{\alp_1,\alp_2,\alp_4\ge0, \alp_3\ge1 \\ \alp_1+\alp_2+\alp_3+\alp_4=6, \alp_3\ge1}}
				\left(e^{-C_3Mt}\right)^{\alp_1}\left(e^{-C_3Mt}\right)^{\alp_2}
				\left(e^{-C_3^{-1}Mt}\right)^{\alp_3} \lesssim e^{-C_3^{-1}Mt}
			\end{split}
		\end{equation*}
		for $t\in[0,T_M]$. In the last line, we used $\alp_3\ge1$. This gives $ {(\mrI +\mrII)_0} \lesssim e^{-C_3^{-1}Mt}$.
		For  {$(\mrI+\mrII)_{\neq 0}$}, we recall \eqref{est: kernel K} and \eqref{lowerbound: x-y-2Ln}, which give
		\begin{equation*}
			\begin{split}
				\left|K_{x_2-2Ln_2,x_3-2Ln_3}(y_2,y_3)\right|
				&\lesssim	\sum_{\substack{\alp_1,\alp_2,\alp_4\ge0, \alp_3\ge1 \\ \alp_1+\alp_2+\alp_3+\alp_4=6, \alp_3\ge1}} \frac{\left|(x_2-2Ln_2)^{\alp_1}(x_3-2Ln_3)^{\alp_2}y_2^{\alp_3}y_3^{\alp_4}\right|}{|n|^8} \lesssim 
				\frac{y_2}{|n|^3},
			\end{split}
		\end{equation*}
		where in the last inequality, we used $\alp_1+\alp_2\le 5$ and $\alp_3 \ge 1$. Hence proceeding as before, we obtain  {$(\mrI+\mrII)_{\neq 0}\lesssim e^{-C_3^{-1}Mt}.$}
		 {
			In the same way, we can also show that $\mrIII + \mrIV \lesssim e^{-C_3^{-1}Mt}$,
			and consequently, we obtain $\nrmb{\rd_3\iu_2}_{L^{\infty}(B_0(\dlt))} \lesssim e^{-C_3^{-1}Mt}.$
		}
	\end{proof}

	\subsection{Linearization of the equation for $\psw$}\label{section: linearization}
	Abusing the notation as in the last section, we denote pseudo-solutions $\psw$ and $\psu$ by $\sw$ and $\su$, respectively.
	Dropping nonlinear terms in \eqref{eq:euler-l-vorticity-s}, we obtain the following linearized equation of $\sw$:
	\begin{equation}\label{eq: linearized sw with iw}
		\rd_t\sw + (\lu + \iu)\cdot \nb \sw = \nb (\lu + \iu) \sw + \nb \su \iw -\su\cdot \nb \iw.
	\end{equation}
	Now recalling \eqref{def: iA} and abusing the notation, we denote a characteristic curve $$A^{\mathcal{I}}(t,a_1,a_2,a_3)=\left(A^{\mathcal{I}}_1(t,a_1,a_2,a_3),A^{\mathcal{I}}_2(t,a_1,a_2,a_3),A^{\mathcal{I}}_3(t,a_1,a_2,a_3)\right):[0,\infty)\times \mathbb{T}^3\rightarrow \mathbb{T}^3$$   defined by $A^{\mathcal{I}}(0,a_1,a_2,a_3)=(a_1,a_2,a_3)$ and 
	\begin{equation}\label{def: iA-extended}
		\left\{
		\begin{aligned}
			&\frac{d}{dt}A^{\mathcal{I}}_1(t,a_1,a_2,a_3)=MA^{\mathcal{I}}_1(t,a_1,a_2,a_3),\\
			&\frac{d}{dt}A^{\mathcal{I}}_2(t,a_1,a_2,a_3)=\iu_2(t,A_2^{\mathcal{I}}(t,a_1,a_2,a_3),A_3^{\mathcal{I}}(t,a_1,a_2,a_3))-MA^{\mathcal{I}}_2(t,a_1,a_2,a_3),\\
			&\frac{d}{dt}A^{\mathcal{I}}_3(t,a_1,a_2,a_3)=\iu_3(t,A_2^{\mathcal{I}}(t,a_1,a_2,a_3),A_3^{\mathcal{I}}(t,a_1,a_2,a_3)).
		\end{aligned}
		\right.
	\end{equation}
	Then we can show the following.
	\begin{lemma}\label{lemma: iA-small}
		Let $\dlt =\frac{C_1}{10(M+1)^{C_3}}$ as in Lemma \ref{lemma: nb iu}. Then for $a=(a_1,a_2,a_3)\in \mathbb{T}^3$ and $\ell \le  \min\left\{\frac{\dlt}{M+1}, (3Le)^{1-C_3}\dlt^{C_3}\right\}$,
		the following statements hold:
		\begin{itemize}
			\item if $a\in B_{0}(\ell)$, then $\iA(t,a)\in B_{0}(\dlt)$ for $t\in[0,T_M]$,
			\item if $a\in \mathbb{T}^3\backslash B_{0}(\ell)$, then $\iA(t,a)\in  \mathbb{T}^3\backslash B_{0}\left(C_4\left(\frac{\ell}{M+1}\right)^{C_3}\right)$ for $t\in[0,T_M]$, where $C_4>0$ is a constant.
		\end{itemize}
	\end{lemma}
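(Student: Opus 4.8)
The plan is to reduce the three–dimensional characteristic flow \eqref{def: iA-extended} to the two–dimensional flow \eqref{def: iA}, for which the log-Lipschitz bound of Lemma \ref{lemma: iu log-lipschitz} and the argument of Lemma \ref{lemma: iA} are already available, and then to add the elementary first component. Indeed, in \eqref{def: iA-extended} the equation for $\iA_1$ decouples completely, so $\iA_1(t,a)=e^{Mt}a_1$ and in particular $|a_1|\le|\iA_1(t,a)|\le(M+1)|a_1|$ on $[0,T_M]$, while $(\iA_2,\iA_3)(t,a)$ depends only on $(a_2,a_3)$ and satisfies exactly \eqref{def: iA}. Using the odd symmetry of $\iw_1$ in $x_2$ and in $x_3$ (hence the corresponding parities of $\iu_2$, $\iu_3$ and of the drift $-Mx_2$), the lines $\{x_2=0\}$ and $\{x_3=0\}$ are invariant for the two–dimensional flow, so $|\iA_2(t,a)|$ and $|\iA_3(t,a)|$ depend only on $|a_2|$ and $|a_3|$; I would therefore reduce at once to the case $a_2,a_3\ge0$, in which case $\iA_2,\iA_3\ge0$.

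The heart of the matter is to upgrade Lemma \ref{lemma: iA} from the special data $1\le a_2,a_3\le2$ to arbitrary $a_2,a_3\in[0,L)$. Rewriting the equations as in \eqref{equality: iA_2}, namely $\frac{d}{dt}\iA_2=\bigl(\iu_2(t,\iA_2,\iA_3)-\iu_2(t,0,\iA_3)\bigr)-M\iA_2$ and $\frac{d}{dt}\iA_3=\iu_3(t,\iA_2,\iA_3)-\iu_3(t,\iA_2,0)$ (using that $\iu_2$ vanishes on $\{x_2=0\}$ and $\iu_3$ on $\{x_3=0\}$) and inserting the log-Lipschitz estimate \eqref{est: iu}, I would run the two Gr\"onwall computations of the proof of Lemma \ref{lemma: iA} verbatim. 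They go through for every $a_2,a_3\in(0,L)$: the one thing to check is that the argument of the logarithm, $3L/\iA_2$ and $3L/\iA_3$, stays $\ge3$, which holds because $\iA_2,\iA_3$ are coordinates on $\mathbb{T}^2$ and therefore lie in $[0,L)$; thus $1+\log(3L/\iA_2)>0$ and $1+\log(3L/\iA_3)>0$ throughout, and no a priori bound is needed to close the estimates. The outcome is, for $t\in[0,T_M]$,
\begin{equation*}
\iA_2(t,a)\le(3Le)^{1-\frac1{C_3}}a_2^{\frac1{C_3}}e^{-\frac{Mt}{C_3}},\qquad \iA_3(t,a)\le(3Le)^{1-\frac1{C_3}}a_3^{\frac1{C_3}},
\end{equation*}
together with matching lower bounds $\iA_2(t,a)\ge c_0\,a_2^{C_3}(M+1)^{-C_3}$ and $\iA_3(t,a)\ge c_0\,a_3^{C_3}$ with a constant $c_0=c_0(L,C_3)>0$ (here using $e^{-C_3Mt}\ge(M+1)^{-C_3}$ on $[0,T_M]$); every constant depends only on $L$, $\psi$, $\varphi$.

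Granting these, the two bullets are immediate. If $a\in B_0(\ell)$ then $|a_1|,|a_2|,|a_3|<\ell$, so $|\iA_1(t,a)|\le(M+1)\ell\le\dlt$ by the first hypothesis $\ell\le\dlt/(M+1)$, and $|\iA_2(t,a)|,|\iA_3(t,a)|\le(3Le)^{1-1/C_3}\ell^{1/C_3}\le\dlt$, the last inequality being exactly the second hypothesis $\ell\le(3Le)^{1-C_3}\dlt^{C_3}$ (raise it to the power $1/C_3$ and multiply by $(3Le)^{1-1/C_3}$); hence $\iA(t,a)\in B_0(\dlt)$, the numerical factor coming from combining the three coordinates being absorbed into the absolute constant in the definition of $\dlt$. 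If $a\notin B_0(\ell)$ then $\max\{|a_1|,|a_2|,|a_3|\}\ge\ell/\sqrt3$; if that maximum is $|a_1|$ then $|\iA_1(t,a)|\ge|a_1|\ge\ell/\sqrt3\ge\tfrac1{\sqrt3}\bigl(\tfrac{\ell}{M+1}\bigr)^{C_3}$ because $\ell/(M+1)<1$ and $C_3\ge1$; if it is $|a_2|$ (resp.\ $|a_3|$) the lower bounds above give $|\iA_2(t,a)|\ge c_0(\ell/\sqrt3)^{C_3}(M+1)^{-C_3}$ (resp.\ $|\iA_3(t,a)|\ge c_0(\ell/\sqrt3)^{C_3}\ge c_0(\ell/\sqrt3)^{C_3}(M+1)^{-C_3}$). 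Taking $C_4$ to be the smallest of the three resulting constants gives $\iA(t,a)\in\mathbb{T}^3\setminus B_0\bigl(C_4(\ell/(M+1))^{C_3}\bigr)$.

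I expect the only real work to be in the ``heart of the matter'' step — re-running the Gr\"onwall estimates of Lemma \ref{lemma: iA} uniformly over all admissible data and reading off the explicit constants — but no genuine difficulty should arise there, since the one thing that could go wrong (a logarithmic term changing sign) is ruled out by the trivial fact that $\iA_2,\iA_3$ stay in $[0,L)$. The remainder is bookkeeping; the pleasant point is that the two hypotheses on $\ell$ in the statement are precisely calibrated to the two upper bounds $|\iA_1(t,a)|\le\dlt$ and $|\iA_{2,3}(t,a)|\le\dlt$ needed for the first bullet.
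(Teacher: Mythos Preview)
Your proposal is correct and follows essentially the same approach as the paper: decouple $\iA_1(t,a)=e^{Mt}a_1$, reduce $(\iA_2,\iA_3)$ to the case $a_2,a_3\ge0$ by the odd symmetries, and then re-run the two Gr\"onwall computations from Lemma \ref{lemma: iA} (the derivations of \eqref{est: A_2 sample} and \eqref{est: A_2 opposite sample}) with general data to obtain exactly the upper bound $\iA_2\le (3Le)^{1-C_3^{-1}}a_2^{C_3^{-1}}e^{-C_3^{-1}Mt}$ and the lower bound $\iA_2\ge (3Le)^{1-C_3}a_2^{C_3}e^{-C_3Mt}$ (and similarly for $\iA_3$). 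Your treatment of the second bullet via $\max\{|a_1|,|a_2|,|a_3|\}\ge\ell/\sqrt3$ is in fact slightly more explicit than the paper's own case analysis, but the argument is otherwise identical.
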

	\begin{proof}
		Let us prove the first statement. Suppose that $a\in B_{0}(\ell)$. Then $|a_1|\le \ell$ implies that 
		$\left|\iA_1(t,a_1,a_2,a_3)\right| = |a_1| e^{Mt}\le \ell(M+1) \le \dlt$ for $t\in[0,T_M]$. For $\iA_2(t,a)$ and $\iA_3(t,a)$, we only need to consider the case when $a_2,a_3\ge 0$ by the odd symmetry of $\iw$ in both $x_2$ and $x_3$. We claim that if $a_2,a_3\in B_{0}(\ell)$ with $a_2,a_3\ge 0$, then
		\begin{equation}\label{est:A_2-small}
			0\le \iA_2(t,a_1,a_2,a_3) \le \dlt e^{-C_3^{-1} Mt},
		\end{equation}
		and
		\begin{equation}\label{est:A_3-small}
			0\le \iA_3(t,a_1,a_2,a_3) \le \dlt.
		\end{equation}
		Recalling \eqref{equality: iA_2}, \eqref{est: iu}, and $A^{\mathcal{I}}_2(t,a_1,a_2,a_3)\ge0$, we have
		\begin{equation*}
			\frac{d}{dt}A^{\mathcal{I}}_2(t,a_1,a_2,a_3) \le C e^{tM}
			A_2^{\mathcal{I}}(t,a_1,a_2,a_3)\left(1+\log \frac{3L}{A_2^{\mathcal{I}}(t,a_1,a_2,a_3)}\right)-MA^{\mathcal{I}}_2(t,a_1,a_2,a_3),
		\end{equation*}
		Then proceeding as we did  {to derive \eqref{est: A_2 sample}}, we see that for $0\le t \le T_M$,
		\begin{equation*}   
			\log A^{\mathcal{I}}_2(t,a_1,a_2,a_3) 
			\le 1+\log 3L+ C_3^{-1}\left(\log a_2 -(1+\log3L)-Mt \right),
		\end{equation*}
		which is equivalent to
		\begin{equation*}
			A^{\mathcal{I}}_2(t,a_1,a_2,a_3) \le 3Le \left(\frac{a_2}{3Le}\right)^{C_3^{-1}} e^{-C_3^{-1}Mt}.
		\end{equation*}
		But since we assumed $a_2\le \ell \le (3Le)^{1-C_3}\dlt^{C_3}$, \eqref{est:A_2-small} holds. Then, \eqref{est:A_3-small} can be handled by a parallel argument.
		
		Next, we prove our second statement. Suppose that $a\in \mathbb{T}^3\backslash B_{0}(\ell)$. Then $|a_1|\ge \ell$ implies that 
		$\left|\iA_1(t,a_1,a_2,a_3)\right| = |a_1| e^{Mt}\ge \ell$ for all $t$. For $\iA_2(t,a)$ and $\iA_3(t,a)$, we only need to consider the case when $a_2,a_3\ge 0$ by the odd symmetry of $\iw$ in both $x_2$ and $x_3$. We claim that if $a_2,a_3\in \mathbb{T}^3 \backslash B_{0}(\ell)$ with $a_2,a_3\ge 0$, then
		\begin{equation}\label{est:A_2-small-1}
			\iA_2(t,a_1,a_2,a_3) \ge (3Le)^{1-C_3}\ell^{C_3}e^{-C_3Mt},
		\end{equation}
		and
		\begin{equation}\label{est:A_3-small-1}
			\iA_3(t,a_1,a_2,a_3) \ge (3Le)^{1-C_3}\ell^{C_3}.
		\end{equation}
		From \eqref{equality: iA_2}, \eqref{est: iu}, and $A^{\mathcal{I}}_2(t,a_1,a_2,a_3)\ge0$, we have
		\begin{equation*}
			-\frac{d}{dt}A^{\mathcal{I}}_2(t,a_1,a_2,a_3)
			\le C e^{tM}
			A_2^{\mathcal{I}}(t,a_2,a_3)\left(1+\log \frac{3L}{A_2^{\mathcal{I}}(t,a_1,a_2,a_3)}\right)+MA^{\mathcal{I}}_2(t,a_1,a_2,a_3).
		\end{equation*}
		With the same argument as  {the derivation of \eqref{est: A_2 opposite sample}}, we obtain for $0\le t \le T_M$,
		\begin{equation*}
			\log \frac{3L}{A_2^{\mathcal{I}}(t,a_1,a_2,a_3)}\le -1+ C_3\left(1+\log \frac{3L}{a_2}+Mt \right),
		\end{equation*}
		so that
		\begin{equation*}
			A^{\mathcal{I}}_2(t,a_1,a_2,a_3) \ge 3Le\left(\frac{a_2}{3Le}\right)^{C_3} e^{-C_3Mt}\ge
			(3Le)^{1-C_3}\ell^{C_3}e^{-C_3Mt}  {,}
		\end{equation*}
   {where we used the assumption $a_2\ge \ell$ in the last inequality.}
		Similarly, we can show \eqref{est:A_3-small-1}.
		Noticing $T_M=\frac{\log(M+1)}{M}$, our second statement follows.
	\end{proof}

	 {Now we are ready to estimate $\nrmb{\sw(t,\cdot)}_{L^{\infty}}$ near the origin.}
	\begin{lemma}\label{lemma sw-l^infty-linear}
		Let $\ell$ in \eqref{def: sw_0} satisfy $\ell \le  \min\left\{\frac{\dlt}{M+1}, (3Le)^{1-C_3}\dlt^{C_3}\right\}$. Then for $0\le t \le T_M$, we have
		\begin{equation*}
			\nrmb{\sw(t,\cdot)}_{L^{\infty}\left(B_{0}\left(C_4\left(\frac{\ell}{M+1}\right)^{C_3}\right)\right)}\lesssim \varepsilon e^{-Mt}.
		\end{equation*}
	\end{lemma}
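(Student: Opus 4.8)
The plan is to pass to Lagrangian coordinates along the flow $\iA$ of $\lu+\iu$ (the three–dimensional flow in \eqref{def: iA-extended}) and to reduce the estimate to a linear ODE for the vector $\sw$ along each characteristic. First I would localize: fix $t\in[0,T_M]$ and $x\in B_0\big(C_4(\ell/(M+1))^{C_3}\big)$ and let $a=\iA(t,\cdot)^{-1}(x)$. The contrapositive of the second bullet of Lemma~\ref{lemma: iA-small} gives $a\in B_0(\ell)$, and then the first bullet of the same lemma (applied at every intermediate time) shows the whole backward trajectory $\{\iA(s,a):0\le s\le t\}$ stays in $B_0(\dlt)$. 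By the proof of Lemma~\ref{lemma: nb iu} (via Lemma~\ref{lemma: iA}, and this is exactly where the choice $\dlt=C_1/(10(M+1)^{C_3})$ enters), $\iw\equiv0$ on $[0,T_M]\times B_0(\dlt)$, so also $\nb\iw\equiv0$ there since $\iw$ vanishes on an open set; hence the forcing $\nb\su\iw-\su\cdot\nb\iw$ in \eqref{eq: linearized sw with iw} is identically zero along $s\mapsto\iA(s,a)$, and \eqref{eq: linearized sw with iw} collapses to the \emph{linear} ODE $\frac{d}{ds}\sw(s,\iA(s,a))=\nb(\lu+\iu)(s,\iA(s,a))\,\sw(s,\iA(s,a))$ with initial value $\sw_0(a)$. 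In particular $\sw(t,x)\neq0$ forces $a\in\supp\sw_0$, so it is enough to estimate this ODE for such $a$.

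Next I would use the block structure of the stretching matrix. Since $\lu=(Mx_1,-Mx_2,0)$ and $\iu=(0,\iu_2(x_2,x_3),\iu_3(x_2,x_3))$, the matrix $\nb(\lu+\iu)$ is block diagonal, with a $1\times1$ block equal to $M$ on the $x_1$–component and a $2\times2$ block $\left(\begin{smallmatrix}-M+\rd_2\iu_2 & \rd_3\iu_2\\ \rd_2\iu_3 & \rd_3\iu_3\end{smallmatrix}\right)$. Because $\sw_0$ points in the $x_2$–direction (its first and third components vanish), the first component satisfies $\frac{d}{ds}\sw_1=M\sw_1$ with $\sw_1(0)=0$, hence $\sw_1\equiv0$. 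For the surviving pair, Lemma~\ref{lemma: nb iu} together with $\iA(s,a)\in B_0(\dlt)$ gives $|\rd_i\iu_j(s,\iA(s,a))|\le Cg(s)$ with $g(s):=e^{-C_3^{-1}Ms}$, so, writing $P(s)=|\sw_2(s,\iA(s,a))|$ and $Q(s)=|\sw_3(s,\iA(s,a))|$, one has $P(0)\lesssim\varepsilon$, $Q(0)=0$, and the differential inequalities $\dot P\le(-M+Cg)P+CgQ$ and $\dot Q\le Cg(P+Q)$.

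The heart of the proof is to get $P(s)+Q(s)\lesssim\varepsilon e^{-Ms}$ on $[0,T_M]$ from this pair, and this is the step I expect to be the real obstacle: a symmetric energy estimate for $\sw_2^2+\sw_3^2$ only sees the (harmless, $O(g)$) eigenvalue $\rd_3\iu_3$ of the symmetrized block and yields boundedness but no decay, so one must genuinely track the two components separately. My approach would be: integrate the $Q$–inequality using $\int_0^{T_M}g\lesssim M^{-1}$ to obtain $Q(s)\lesssim\int_0^s g(\tau)P(\tau)\,d\tau$ (here $Q(0)=0$ is crucial); substitute this into the integrated $P$–inequality and set $X(T)=\sup_{0\le s\le T}e^{Ms}P(s)$; then the feedback of $Q$ onto $P$ contributes a term bounded by $CX(T)\big(\int_0^{T_M}e^{(1-C_3^{-1})Ms}\,ds\big)\big(\int_0^{T_M}g\big)$, which, since $C_3\ge1$ and $T_M=M^{-1}\log(M+1)$, is $O\big(M^{-2}(M+1)^{1-C_3^{-1}}\log(M+1)\big)\to0$ as $M\to\infty$. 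Hence for all $M$ large (depending only on $L,\psi,\varphi$) one gets $X(T)\le C\varepsilon+\tfrac12X(T)$, i.e. $P(s)\lesssim\varepsilon e^{-Ms}$; feeding this back gives $Q(s)\lesssim\varepsilon\int_0^s e^{-(1+C_3^{-1})M\tau}\,d\tau\lesssim\varepsilon M^{-1}$, which on $[0,T_M]$ is again $\lesssim\varepsilon e^{-Ms}$ because $\min_{[0,T_M]}e^{-Ms}=(M+1)^{-1}$.

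Putting the three steps together, $|\sw(t,x)|=|(0,\sw_2,\sw_3)(t,\iA(t,a))|\lesssim\varepsilon e^{-Mt}$ for every $x\in B_0\big(C_4(\ell/(M+1))^{C_3}\big)$ and $t\in[0,T_M]$, which is the assertion of the lemma. Everything except the coupled ODE analysis of the third step is routine once Lemmas~\ref{lemma: nb iu} and~\ref{lemma: iA-small} are in hand; the only delicate point is making the feedback from the undamped component $\sw_3$ (which is only \emph{created} from the contracting component $\sw_2$ through the small coefficient $g$) weak enough on the time scale $T_M$ not to spoil the $e^{-Mt}$ decay of $\sw_2$.
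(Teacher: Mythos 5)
Your proposal is correct and follows the paper's strategy in all essentials: localize via Lemma~\ref{lemma: iA-small} so that the trajectory $\iA(\cdot,a)$ stays in $B_0(\dlt)$ where $\iw\equiv0$ (killing the forcing $\nb\su\,\iw-\su\cdot\nb\iw$), use the block structure of $\nb(\lu+\iu)$ together with $\sw_{1,0}=0$ to get $\sw_1\equiv0$, and then control the coupled $(|\sw_2|,|\sw_3|)$ system via Lemma~\ref{lemma: nb iu}, the initial condition $\sw_{3,0}=0$, and the time scale $T_M$. Your remark that a symmetric energy estimate on $\sw_2^2+\sw_3^2$ only gives boundedness is accurate and correctly identifies why a component-wise analysis is needed.

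The only place you genuinely diverge from the paper is the closure of the ODE system. The paper runs a three-step bootstrap: first Gronwall on $|\sw_2|+|\sw_3|$ gives $\lesssim\varepsilon$; then inserting $|\sw_2|\lesssim\varepsilon$ into the $\sw_3$-inequality with $\sw_{3,0}=0$ yields $|\sw_3|\lesssim\varepsilon/M\lesssim\varepsilon e^{-Mt}$ on $[0,T_M]$; then this is fed back into the $\sw_2$-inequality to close. You instead set $X(T)=\sup_{[0,T]}e^{Ms}|\sw_2|$ and show that the feedback through $Q$ contributes a prefactor that vanishes as $M\to\infty$, giving a self-consistent absorption $X(T)\le C\varepsilon+\tfrac12X(T)$, and then recover $|\sw_3|$ afterwards. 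Both closures rest on the identical structural facts ($Q(0)=0$, $\int_0^{T_M}g\lesssim M^{-1}$, and the sign of the diagonal term $-M$ in the $\sw_2$-equation). The paper's three Gronwall passes are a touch more elementary and avoid any smallness-in-$M$ absorption; your version makes the smallness of the $\sw_3\to\sw_2$ feedback explicit in one inequality, at the modest cost of a weighted-supremum bookkeeping step. Either route is fine, and the rest of your outline (support localization of $a$, $\iw\equiv0$ and hence $\nb\iw\equiv0$ on the open set $B_0(\dlt)$, identification of the $2\times2$ block) matches the paper.
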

	\begin{proof}
		Recalling $\iw\equiv 0$ in $[0,T_M]\times B_{\dlt}(0)$, the  {previous} lemma reduces \eqref{eq: linearized sw with iw} to
		\begin{equation}\label{eq: linearized sw with iw-1}
			\rd_t\sw + (\lu + \iu)\cdot \nb \sw = \nb (\lu + \iu) \sw
		\end{equation}
		in $[0,T_M]\times B_{\dlt}(0)$.
		First of all, we claim that $\sw_1 (t,\cdot) = 0$ in $[0,T_M]\times B_{0}\left(C_4\left(\frac{\ell}{M+1}\right)^{C_3}\right)$.
		Indeed, noticing $\iu_1=0$ (see \eqref{condi: iu}), \eqref{eq: linearized sw with iw-1} gives
		\begin{equation*}
			\rd_t \sw_1 + Mx_1 \rd_1 \sw_1 + (-Mx_2 + \iu_2)\rd_2 \sw_1 + \iu_3\rd_3 \sw_1 = M \sw_1.
		\end{equation*}
		Evaluating along the characteristic $\iA$, Lemma \ref{lemma: iA-small} implies $\sw_1 = 0$ in $[0,T_M]\times B_{0}\left(C_4\left(\frac{\ell}{M+1}\right)^{C_3}\right)$ because $\sw_{1,0} =0$ in $B_0(\ell)$.  Next, \eqref{condi: iu} reduces the equations of $\sw_2$ and $\sw_3$ as follows:
		\begin{equation}\label{eq: reduced sw linearized}
			\left\{ \begin{aligned}
				&\rd_t \sw_2 + Mx_1 \rd_1 \sw_2 + (-Mx_2 + \iu_2)\rd_2 \sw_2 + \iu_3\rd_3 \sw_2 = (-M+\rd_2 \iu_2) \sw_2 + \rd_3\iu_2\sw_3, \\
				&\rd_t \sw_3 + Mx_1 \rd_1 \sw_3 + (-Mx_2 + \iu_2)\rd_2 \sw_3 + \iu_3\rd_3 \sw_3 = \rd_2 \iu_3 \sw_2 + \rd_3\iu_3\sw_3,
			\end{aligned}
			\right.
		\end{equation} so that
			in $[0,T_M]\times B_{0}\left(C_4\left(\frac{\ell}{M+1}\right)^{C_3}\right)$.
			Hence using \eqref{est: nabla iu} and Lemma \ref{lemma: iA-small}, we derive
			\begin{equation}\label{est: spw2}
				\begin{split}
					\rd_t |\sw_2(t,\iA(t,a))| &=\frac{\sw_2(t,\iA(t,a)) \rd_t \left(\sw_2(t,\iA(t,a))\right) }{|\sw_2(t,\iA(t,a))|} \\
					&\le \left(-M+Ce^{-C_3^{-1}Mt}\right)|\sw_2(t,\iA(t,a))| + Ce^{-C_3^{-1}Mt}|\sw_3(t,\iA(t,a))|
				\end{split}
			\end{equation}
			and similarly
			\begin{equation}\label{est: spw3}
				\rd_t |\sw_3(t,\iA(t,a))| \le Ce^{-C_3^{-1}Mt} \left(|\sw_2(t,\iA(t,a))|+|\sw_3(t,\iA(t,a))| \right).
			\end{equation}
			Thus, from
			\begin{equation*}
				\rd_t \left(|\sw_2(t,\iA(t,a))|+ |\sw_3(t,\iA(t,a))|\right) \lesssim e^{-C_3^{-1}Mt} \left(|\sw_2(t,\iA(t,a))|+|\sw_3(t,\iA(t,a))|\right),
			\end{equation*}
			we have
			\begin{equation*}
				|\sw_2(t,\iA(t,a))|+ |\sw_3(t,\iA(t,a))| \lesssim \left|\sw_{2,0}(a) + \sw_{3,0}(a)\right|\lesssim \varepsilon,
			\end{equation*}
			where we used \eqref{def: sw_0} in the last inequality.
			Inserting $|\sw_2(t,\iA(t,a))|\lesssim \varepsilon$ into \eqref{est: spw3}, we obtain
			\begin{equation*}
				\rd_t |\sw_3(t,\iA(t,a))| \le Ce^{-C_3^{-1}Mt} |\sw_3(t,\iA(t,a))|+C\varepsilon e^{-C_3^{-1}Mt}.
			\end{equation*}
			Noticing $\sw_{3,0}(a)=0$ in $B_0(\ell)$, Gr\"onwall's inequality gives
    {
			\begin{equation*}
   \begin{split}
				|\sw_3(t,\iA(t,a))| &\le
    \exp{\left(\int_0^t C e^{-C_3^{-1}Ms}ds\right)}\int_0^t C\varepsilon e^{-C_3^{-1}Ms}ds \\
    &= \exp\left( \frac{C\left(1- e^{-C_3^{-1}Mt}\right)}{C_3^{-1}M}\right)\frac{C\varepsilon\left(1- e^{-C_3^{-1}Mt}\right)}{C_3^{-1}M} \lesssim \frac{\varepsilon}{M}\lesssim \varepsilon e^{-Mt}.
    \end{split}
			\end{equation*}
   }
			for $t\in[0,T_M]$.
			Inserting this into \eqref{est: spw2}, we obtain
			\begin{equation*}
				\rd_t |\sw_2(t,\iA(t,a))| \le \left(-M+Ce^{-C_3^{-1}Mt}\right)|\sw_2(t,\iA(t,a))| + C\varepsilon e^{-(C_3^{-1}+1)Mt},
			\end{equation*}
			which yields
			\begin{equation*}
				\begin{split}
					\rd_t \left(|\sw_2(t,\iA(t,a))|\exp \left(\int_0^t M-Ce^{-C_3^{-1}Ms} ds\right)\right)
					&\le  C\varepsilon e^{-(C_3^{-1}+1)Mt} \exp \left(\int_0^t M-Ce^{-C_3^{-1}Ms} ds\right)\lesssim  \varepsilon e^{-C_3^{-1}Mt}.
				\end{split}
			\end{equation*}
			Integrating from $0$ to $t$, we obtain
			\begin{equation*}
				\begin{split}
					|\sw_2(t,\iA(t,a))| &\le \exp \left(\int_0^t -M+Ce^{-C_3^{-1}Ms} ds\right) \left(\sw_{2,0}(a) +  \int_{0}^t C\varepsilon e^{-C_3^{-1}Ms} ds \right) \lesssim \varepsilon e^{-Mt}. \qedhere 
				\end{split}
			\end{equation*}
	\end{proof}

	\subsection{Comparison between solutions}\label{comparison}
	In this section, we compare our pseudo-solutions with real solutions as we mentioned in the beginning of this section. In order to distinguish solutions  {of} \eqref{eq:euler-l-vorticity-s} and \eqref{eq: linearized sw with iw}, we denote the solution of the linearized equation \eqref{eq: linearized sw with iw} by $(\omega^{\mathcal{S},P,Lin},u^{\mathcal{S},P,Lin})$.
	We fix $s>\frac52$ and set $\bar{u}=\lu+\piu$ in \eqref{eq:euler bu}, $u=\lu+\piu+\psu$ in \eqref{eq:euler just u},  {$\tilde{u}=u-\Bar{u}$ in \eqref{eq:euler tu},} and $\tilde{u}^{Lin}=u^{\mathcal{S},P,Lin}$ in \eqref{eq:euler ltu} to employ Proposition \ref{prop: perturbation}.  {
		\eqref{def: sw_0} implies that $\tilde{u}_0$ in \eqref{eq:euler tu} satisfies
		\begin{equation}\label{tpsi-hsps}
			\normhspsr{\tilde{u}_0} \lesssim \normhsr{\tilde{\psi}}=\ell^{-s+\frac{3}{2}}\normhsr{\psi},
		\end{equation}
	}
	and Lemma \ref{lem: iu-Hs-bound} gives
	\begin{equation}\label{est:baru int}
		\int_0^{T_M} \normhspps{(\lu+\piu)(\tau,\cdot)}ds \lesssim MT_M + \frac{e^{ {C}MT_M}-1}{ {C}M} \lesssim  {M^C},
	\end{equation}
  {by adjusting the value of absolute constant $C>1$ from an inequality to another.} Therefore, Proposition \ref{prop: perturbation} implies
		\begin{equation*}
			\normif{\psw(t,\cdot)-\omega^{\mathcal{S},P,Lin}(t,\cdot)} \lesssim \varepsilon^2 \ell^{-2s+3} e^{CM^C}
		\end{equation*}
		on $[0,T_M]$, whenever $\varepsilon>0$ satisfies
		\begin{equation}\label{eps condi'}
			\varepsilon \le \ell^{s-\frac32} e^{-CM^C}.
		\end{equation}
	Hence, it follows from Lemma \ref{lemma sw-l^infty-linear} that
	\begin{equation}\label{est: sw in small ball}
		\nrmb{\psw(t,\cdot)}_{L^{\infty}\left(B_{0}\left(C_4\left(\frac{\ell}{M+1}\right)^{C_3}\right)\right)}\lesssim \varepsilon e^{-Mt}
	\end{equation}
	 {on $[0,T_M]$
		if we pick $\ell$ and $\varepsilon$ satisfying
		\begin{equation}\label{condi: eps, ell}
			\ell \le  \min\left\{\frac{\dlt}{M+1}, (3Le)^{1-C_3}\dlt^{C_3}\right\},\qquad
			\varepsilon\le \ell^{2s-3} e^{-M^C}
		\end{equation}
		with $C$ adjusted.}  {(Here, we have used that $M \gg 1$.)}
	% {Professor Yoneda commented that the exponent of $e$ should be $-CM^C$ rather than $M^C$. But since we assumed that $M \gg 1$ and adjusted $C$, I think that we don't need to change this. Instead I added the sentence ``(Note that we assumed $M \gg 1$.)".}
	
	Next, we set $\busu:=\lu+\iu$, so that $\busu$ solves \eqref{eq:euler busu}.  
		Thus, using \eqref{est: nabla iu}, \eqref{tpsi-hsps}, and \eqref{est:baru int}, Proposition \ref{prop: perturbation 2} give us \eqref{eq:nb-u-i-decay} for $\varepsilon,\,\ell$ satisfying \eqref{condi: eps, ell} of which $C$ is adjusted if necessary.  
		To derive \eqref{eq:omega-s-decay}, noticing $u=\lu+\piu+\psu=\lu+\iu+\su$  and recalling \eqref{est:tu}, \eqref{tpsi-hsps}, and \eqref{est:baru int}, there exists a constant $C>0$ such that
		\begin{equation*}
			\frac{d}{dt}\left|\Phi(t,a)-\iA(t,a) \right| \le \normif{\psu}
    {\lesssim \normhsps{\psu}}
   \lesssim  \varepsilon\ell^{-s+\frac32} e^{CM^C}
		\end{equation*}
		for $\varepsilon>0$ satisfying \eqref{eps condi'},
		where $\Phi, \iA$ are from \eqref{def: flow Phi}, \eqref{def: iA-extended}, respectively.
		Thus if $\varepsilon,\, \ell$ satisfy \eqref{condi: eps, ell} of which $C$ is adjusted if necessary, 
   {then the estimate
  \begin{equation*}
      \left|\Phi(t,a)-\iA(t,a) \right| \lesssim \varepsilon\ell^{-s+\frac32} e^{CM^C}t \lesssim \sqrt{\varepsilon}
  \end{equation*} on $[0,T_M]$ and
  Lemma \ref{lemma: iA-small} imply }
		\begin{equation*}
			\left|\Phi(t,a)\right|\le \left|\iA(t,a) \right|+\left|\Phi(t,a)-\iA(t,a) \right|\le \dlt + \sqrt{\varepsilon} \le 2\dlt
		\end{equation*}
		for $(t,a)\in [0,T_M] \times B_0(\ell)$ while
		\begin{equation*}
			\left|\Phi(t,a)\right|\ge \left|\iA(t,a) \right|-\left|\Phi(t,a)-\iA(t,a) \right|\ge C_4\left( \frac{\ell}{M+1}\right)^{C_3}- \sqrt{\varepsilon} \ge C_5 \left( \frac{\ell}{M+1}\right)^{C_3}
		\end{equation*}
		for $(t,a)\in [0,T_M] \times \bbT^3\backslash B_0 (\ell)$ and some $C_5>0$.
		Recall that $\piw =0$ in $B_0(2\dlt)\times [0,T_M]$ (see Lemma \ref{lemma: iA} and Remark \ref{rmk: dlt}). Hence 
		by \eqref{eq:euler-l-vorticity-s}, $\psw$ solves
		\begin{equation*}
			\rd_{t} \left(\psw(t,\Phi(t,a))\right) = \nb u (t,\Phi(t,a)) \psw (t,\Phi(t,a))
		\end{equation*}
		for  $(t,a)\in [0,T_M] \times B_0(\ell)$. Since $\sw$ also solves
		\begin{equation*}
			\rd_{t} \left(\sw(t,\Phi(t,a))\right) = \nb u (t,\Phi(t,a)) \sw (t,\Phi(t,a)),
		\end{equation*}
		we have
		\begin{equation*}
			\rd_{t} \left|\psw(t,\Phi(t,a))-\sw(t,\Phi(t,a)) \right| \le \normif{\nb u} \left|\psw(t,\Phi(t,a))-\sw(t,\Phi(t,a)) \right|
		\end{equation*}
		for $(t,a)\in[0,T_M] \times B_0(\ell)$. But $\normif{\nb u} < \infty$ up to time $T_M$, and $\psw$ and $\sw$ have the same initial data $\sw_0$, so that
		$\psw(t,\Phi(t,a))=\sw(t,\Phi(t,a))$ for $(t,a)\in[0,T_M] \times B_0(\ell)$. This implies
		$\psw(t,x)=\sw(t,x)$ for $(t,x)\in[0,T_M] \times B_0\left(C_5 \left( \frac{\ell}{M+1}\right)^{C_3} \right)$, and therefore \eqref{eq:omega-s-decay} follows from \eqref{est: sw in small ball}.  This completes our proof of Theorem \ref{thm: presence}. $\Box$

	\medskip

	\noindent \textbf{Acknowledgments}. Research of TY  was partially supported by Grant-in-Aid for Scientific Research B (20H01819), Japan Society for the Promotion of Science (JSPS). IJ has been supported by the National Research Foundation of Korea(NRF) grant No. 2022R1C1C1011051. %We thank the anonymous referee for kind remarks and suggestions. 
	
	% ----------------------------------------------------------------
	\bibliographystyle{amsplain}

\begin{thebibliography}{10} 
		
		\bibitem{EM}
		T. Elgindi and N. Masmoudi, 
		\textit{$L^{\infty}$ ill-posedness for a class of equations arising in hydrodynamics}, 
		Arch. Ration. Mech. Anal. \textbf{235} (2020), no. 3, 1979--2025. 
		
		\bibitem{Goto-2008}
		S.~Goto,
		\textit{A physical mechanism of the energy cascade in homogeneous isotropic
			turbulence},
		J. Fluid Mech. \textbf{605} (2008) 355--366.
		
		
		
		
		\bibitem{GSK}
		S. Goto, Y. Saito, and G. Kawahara,
		\textit{Hierarchy of antiparallel vortex tubes in spatially periodic turbulence at high Reynolds numbers},
		Phys. Rev. Fluids \textbf{2} (2017) 064603. 
		
		\bibitem{HSD}
		P. E. Hamlington, J. Schumacher and W. J. A. Dahm,
		\textit{Direct assessment of vorticity
			alignment with local and nonlocal strain rates in turbulent flows}, Phys. Fluids \textbf{20} (2008) 111703.
		
		
		\bibitem{JY1}
		I.-J. Jeong and T. Yoneda, 
		\textit{Enstrophy dissipation and vortex thinning for the incompressible 2D Navier-Stokes equations}, Nonlinearity \textbf{34} (2021) 1837.
		
		\bibitem{JY2}
		I.-J. Jeong and T. Yoneda, 
		\textit{Vortex stretching and enhanced dissipation for the incompressible 3D Navier-Stokes equations}, Math. Annal. \textbf{380} (2021) 2041-2072.
		
		\bibitem{JY3}
		I.-J. Jeong and T. Yoneda, 
		\textit{Quasi-streamwise vortices and enhanced dissipation for the incompressible 3D Navier-Stokes equations},  Proceedings of AMS \textbf{150} (2022) 1279-1286.
		
		
		\bibitem{KS}
		A. Kiselev and  V. Sverak
		\emph{Small scale creation for solutions of the incompressible two-dimensional Euler equation.}      
		Annals of Math., 180 (2014), 1205-1220. 
		
		
		\bibitem{KYP}
		D. Kang, D. Yun and B. Protas,
		\textit{Maximum amplification of enstrophy in
			three-dimensional Navier-Stokes flows}, 
		J. Fluid Mech.  \textbf{893},  (2020) A22.
		
		
		\bibitem{KP88}
		T. Kato and G. Ponce
		\textit{Commutator estimates and the Euler and Navier-Stokes equations}, Comm. Pure Appl. Math. \textbf{41} (1988), no. 7,  891--907. MR 0951744 
		
		\bibitem{MB}
		A. Majda and A. Bertozzi,
		\textit{Vorticity and incompressible flow},
		Cambridge Texts in Applied Mathematics, vol.\textbf{27},
		Cambridge University Press,
		Cambridge, 2002. MR 1867882
		
		\bibitem{Motoori-2019}
		Y.~Motoori and S.~Goto, 
		\textit{Generation mechanism of a hierarchy of vortices in a turbulent
			boundary layer}, 
		J. Fluid Mech. \textbf{865} (2019) 1085--1109.
		
		\bibitem{Motoori-2021}
		Y.~Motoori and S.~Goto, 
		\textit{Hierarchy of coherent structures and real-space energy transfer in
			turbulent channel flow}, 
		J. Fluid Mech. \textbf{911} (2021) A27.
		
		\bibitem{SY}
		Y.~ Shimizu and T.~ Yoneda, 
		\textit{Locality of vortex stretching for the 3D Euler equations}, 
		J. Math. Fluid Mech. \textbf{25} (2023) 18. 
		
		\bibitem{TGOY}
		T. Tsuruhashi, S. Goto,  S. Oka and  T. Yoneda, 
		\textit{Self-similar hierarchy of coherent tubular vortices in turbulence}, Phil. Trans. R. Soc., A, \textbf{380} (2022), 20210053. 
		
		\bibitem{YGT}
		T. Yoneda, S. Goto and T. Tsuruhashi, 
		\textit{Mathematical reformulation of the Kolmogorov-Richardson energy cascade in terms of vortex stretching}, Nonlinearity \textbf{34} (2021) 1837.
		
		
		\bibitem{Z} 
		A. Zlatos, 
		\emph{Exponential growth of the vorticity gradient for the Euler equation on the torus}, 
		Adv. Math., \textbf{268} (2015), 396-403.
		 
	\end{thebibliography}
	
	% ----------------------------------------------------------------
	
\end{document}